\DeclareMathSymbol{\Z}{\mathalpha}{AMSb}{"5A}
\DeclareMathSymbol{\R}{\mathalpha}{AMSb}{"52}
\DeclareMathSymbol{\C}{\mathalpha}{AMSb}{"43}
\DeclareMathSymbol{\D}{\mathalpha}{AMSb}{"44}
\DeclareMathOperator{\re}{Re} \DeclareMathOperator{\im}{Im}
\newcommand{\KHO}{K_{\scriptscriptstyle H}^{\scriptscriptstyle O}}
\newcommand{\CHO}{C_{\scriptscriptstyle H}^{\scriptscriptstyle O}}
\newcommand{\SHO}{S_{\scriptscriptstyle H}^{\scriptscriptstyle O}}
\newcommand{\cnj}[1]{~\overline{#1}}
\newcommand{\series}[3]{\displaystyle \sum_{{#1}={#2}}^{#3} }
\newtheorem{thm}{Theorem}
\newtheorem{lem}[thm]{Lemma}
\newtheorem*{kthma}{Theorem A}
\newtheorem*{kthmb}{Theorem B}
\theoremstyle{remark}
\newtheorem{rem}{Remark}
\newtheorem{exmp}{Example}
\newtheorem*{cohn}{Cohn's Rule}
\newtheorem*{schurcohn}{Corollary to the Schur-Cohn Algorithm}
\begin{document}

\title[]{Convolutions of harmonic convex mappings}
\date{February 2, 2009}

\author[Dorff]{Michael Dorff}
 \email{mdorff@math.byu.edu}
\address{Department of Mathematics, Brigham Young University,
Provo, Utah, 84602, U.S.A.}

\author[Nowak]{Maria Nowak}
\email{nowakm@golem.umcs.lublin.pl}
\address{Department of Mathematics,
Maria Curie-Sklodowska University, 20-031 Lublin, Poland}

\author[ Wo\l oszkiewicz]{Magdalena Wo\l oszkiewicz}
\email{mwoloszkiewicz@gmail.com}
\address{Department of Mathematics,
Maria Curie-Sklodowska University, 20-031 Lublin, Poland}

\keywords{Harmonic Mappings, Convoutions, Univalence}
\subjclass{30C45}

\thanks{}
\dedicatory{}

\begin{abstract}
The first author proved that the harmonic convolution of a normalized right half-plane
mapping with either another normalized right half-plane mapping or
a normalized vertical strip mapping is convex in the direction of
the real axis. provided that it is locally univalent. In this
paper, we prove that in general the assumption of local univalency cannot be omitted. However, we are able to show that in some cases these harmonic convolutions are locally univalent. Using this we obtain interesting examples of univalent harmonic maps one of which is a map onto the plane with two parallel slits.
\end{abstract}

\maketitle

\section{Introduction}

Let $\D$ be the unit disk. We consider the family of
complex-valued harmonic functions $f=u+iv$ defined in $\D$, where
$u$ and $v$ are real harmonic in $\D$. Such functions can be
expressed as $f=h+\cnj{g}$, where
\begin{equation*}
h(z)=z+\series{n}{2}{\infty}a_nz^n \hspace{.3in} \mbox{and}
\hspace{.3in} g(z)=\series{n}{1}{\infty}b_nz^n
\end{equation*}
are analytic in $\D$. The harmonic function $f=h+\cnj{g}$ is locally
one-to-one and sense- preserving in $\D$ if and only if
\begin{equation*}
\label{eq:sp} |g'(z)|<|h'(z)|, \forall z \in \D.
\end{equation*}
In such a case, we say that $f$ is locally univalent and $f$ satisfies the dilatation condition. Let $\SHO$ be the
class of complex-valued, harmonic, sense-preserving, univalent
functions $f$ in $\D$, normalized so that $f(0)=0$, $f_{z}(0)=1$, and  $f_{\cnj{z}}(0)=0$.
Let $\KHO$ and $\CHO$ be the subclasses of $\SHO$ mapping $\D$ onto convex and close-to-convex domains, respectively. We will deal with $\CHO$ mappings that are convex in one direction.

For analytic functions $f(z)=z+\sum_{{n}={2}}^{\infty}a_nz^n$ and
$F(z)=z+\sum_{{n}={2}}^{\infty}A_nz^n$, their convolution (or
Hadamard product) is defined as
$f*F=z+\sum_{{n}={2}}^{\infty}a_nA_nz^n$. In the harmonic case,
with
\begin{align*}
f&=h+\cnj{g}=z+\series{n}{2}{\infty}a_nz^n+\series{n}{1}{\infty}
\cnj{b}_n\cnj{z}^n~~~\mbox{and} \\
F&=H+\cnj{G}=z+\series{n}{2}{\infty}A_nz^n+\series{n}{1}
{\infty}\cnj{B}_n\cnj{z}^n,
\end{align*}
define the harmonic convolution as
\begin{equation*}
f*F=h*H+\cnj{g*G}=z+\series{n}{2}{\infty}a_nA_nz^n+\series{n}{1}
{\infty}\cnj{b_nB_n}\cnj{z}^n,
\end{equation*}

There have been some results about harmonic convolutions of
functions (see \cite{css}, \cite{dor}, \cite{goo}, and \cite{rs}). For the convolution of analytic
functions, if $f_1, f_2 \in K$, then $f_1 * f_2 \in K$. Also, the
right half-plane mapping, $\frac{z}{1-z}$, acts as the convolution
identity. In the harmonic case, there are infinitely many right
half-plane mappings and the harmonic convolution of one of these
right half-plane mappings with a function $f \in \KHO$ may not
preserve the properties of $f$, such as convexity or even
univalence (see \cite{dor} for an example).

In \cite{dor}, the following theorems were proved:

\begin{kthma}
\label{thm:rhprhp} Let $f_1=h_1+\cnj{g}_1, f_2=h_2+\cnj{g}_2 \in \KHO$ be
right half-plane mappings. If $f_1*f_2$ satisfies the dilatation
condition, then  $f_1* f_2 \in \SHO$ and is convex in the
direction of the real axis.
\end{kthma}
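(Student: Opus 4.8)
The plan is to reduce Theorem~A to a statement about a single analytic function via the shearing theorem of Clunie and Sheil-Small, and then to verify that this analytic function maps $\D$ onto a domain convex in the direction of the real axis by computing its derivative explicitly.

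I would begin by recording the structural description of a right half-plane mapping: if $f=h+\cnj{g}\in\KHO$ maps $\D$ onto $\{w:\re w>-\tfrac12\}$, then $h(z)+g(z)=\frac{z}{1-z}$ (see \cite{css},~\cite{dor}). Hence, writing $h_i(z)=z+\sum_{n\ge 2}a_n^{(i)}z^n$ and $g_i(z)=\sum_{n\ge 1}b_n^{(i)}z^n$ for $i=1,2$, we have $a_n^{(i)}+b_n^{(i)}=1$ for every $n$. Put $f_1*f_2=\mathcal H+\cnj{\mathcal G}$ with $\mathcal H=h_1*h_2$ and $\mathcal G=g_1*g_2$. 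Because $a_1^{(i)}=1$ and $b_1^{(i)}=0$, the convolution is automatically normalized, so that $(f_1*f_2)(0)=0$, $(f_1*f_2)_z(0)=1$, $(f_1*f_2)_{\cnj{z}}(0)=0$. Since, by assumption, $f_1*f_2$ satisfies the dilatation condition, it is locally univalent and sense-preserving; the shearing theorem then says that $f_1*f_2$ is univalent and convex in the direction of the real axis \emph{if and only if} $\varphi:=\mathcal H-\mathcal G$ is a conformal mapping of $\D$ onto a domain convex in the direction of the real axis. The whole problem is thereby reduced to this assertion about $\varphi$.

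Next I would identify $\varphi$. Comparing Taylor coefficients and using $b_n^{(i)}=1-a_n^{(i)}$,
\[
[z^n]\varphi=a_n^{(1)}a_n^{(2)}-b_n^{(1)}b_n^{(2)}=a_n^{(1)}a_n^{(2)}-(1-a_n^{(1)})(1-a_n^{(2)})=a_n^{(1)}+a_n^{(2)}-1,
\]
so that $\varphi(z)=h_1(z)+h_2(z)-\frac{z}{1-z}$ (equivalently $\varphi=h_2-g_1=h_1-g_2$), with $\varphi(0)=0$ and $\varphi'(0)=1$. Let $\omega_i=g_i'/h_i'$ be the dilatation of $f_i$, so $|\omega_i(z)|<1$ in $\D$. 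Differentiating $h_i+g_i=\frac{z}{1-z}$ gives $h_i'(1+\omega_i)=\frac{1}{(1-z)^2}$, hence $h_i'=\frac{1}{(1+\omega_i)(1-z)^2}$, and therefore
\[
(1-z)^2\varphi'(z)=\frac{1}{1+\omega_1(z)}+\frac{1}{1+\omega_2(z)}-1=\frac{1-\omega_1(z)\omega_2(z)}{(1+\omega_1(z))(1+\omega_2(z))}.
\]

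Finally I would show that $\re\bigl[(1-z)^2\varphi'(z)\bigr]>0$ throughout $\D$. This is where the calculation pays off: by the elementary identity
\[
\frac{1-ab}{(1+a)(1+b)}=\frac12\left(\frac{1-a}{1+a}+\frac{1-b}{1+b}\right)\qquad(a,b\in\D),
\]
and since $\zeta\mapsto\frac{1-\zeta}{1+\zeta}$ carries $\D$ onto the right half-plane, each summand $\frac{1-\omega_i(z)}{1+\omega_i(z)}$ has positive real part, hence so does $(1-z)^2\varphi'(z)$. Now $\varphi(0)=0$, $\varphi'(0)\ne 0$, and $\re[(1-z)^2\varphi'(z)]>0$; by the standard criterion for analytic functions convex in the direction of the real axis (the one used in \cite{dor}, ultimately going back to Royster--Ziegler / Clunie--Sheil-Small), $\varphi$ is univalent on $\D$ and $\varphi(\D)$ is convex in the direction of the real axis. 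Together with the reduction of the second paragraph this yields $f_1*f_2\in\SHO$ and convex in the direction of the real axis.

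The parts that genuinely need care, as opposed to the (short) computations above, are: the structural fact $h+g=\frac{z}{1-z}$ for right half-plane mappings in $\KHO$, and the precise statement of the criterion that turns $\re[(1-z)^2\varphi'(z)]>0$ into ``univalent and convex in the direction of the real axis''. Everything else --- the coefficient identity for $\varphi$, the formula for $(1-z)^2\varphi'$, and the algebraic identity yielding positivity --- is mechanical once the set-up is in place.
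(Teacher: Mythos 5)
Your proposal is correct and follows essentially the same route as the paper: the paper's proof of its Theorem~\ref{thm:genA} (the slanted half-plane generalization, with $\gamma_1=\gamma_2=0$) likewise reduces via the Clunie--Sheil-Small shearing theorem to the analytic shear $h_1*h_2-g_1*g_2$, identifies it (through $h_i+g_i=\frac{z}{1-z}$) with $\frac12\big[(h_1-g_1)+(h_2-g_2)\big]$, and concludes from the Royster--Ziegler criterion with comparison function $\frac{z}{(1-z)^2}$ by noting that $\re\frac{1-\omega_i}{1+\omega_i}>0$. Your identity $(1-z)^2\varphi'=\frac12\left(\frac{1-\omega_1}{1+\omega_1}+\frac{1-\omega_2}{1+\omega_2}\right)$ is exactly the paper's computation $zF_i'=\frac{zp_i(z)}{(1-z)^2}$, $\re p_i>0$, in slightly different packaging.
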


\begin{kthmb}
\label{thm:rhpavs} Let $f_1=h_1+\cnj{g}_1 \in \KHO$ be a right
half-plane mapping and $f_2=h_2+\cnj{g}_2 \in \KHO$ be a vertical strip
mapping. If $f_1 * f_2$ satisfies the dilatation condition, then
$f_1 * f_2 \in \SHO$ and is convex in the direction of the real
axis.
\end{kthmb}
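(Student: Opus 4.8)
The plan is to derive the theorem from the Clunie--Sheil-Small shearing theorem applied to the analytic function
\[
F := h_1*h_2 - g_1*g_2 ,
\]
the ``analytic minus co-analytic part'' of $f_1*f_2 = (h_1*h_2) + \overline{g_1*g_2}$. Since $f_1*f_2$ is assumed to satisfy the dilatation condition it is locally univalent and sense-preserving, so it is enough to prove that $F$ is analytic, univalent and maps $\D$ onto a domain convex in the direction of the real axis; the shearing theorem then gives that $f_1*f_2$ is univalent with image convex in that direction, and the normalizations of $f_1,f_2$ pass to the convolution, so $f_1*f_2\in\SHO$. To handle $F$ I would use the structure of the two factors: the right half-plane mapping satisfies $h_1+g_1 = \dfrac{z}{1-z}$, and the vertical strip mapping satisfies $h_2+g_2 = \dfrac{1}{2i\sin\alpha}\log\dfrac{1+ze^{i\alpha}}{1+ze^{-i\alpha}}$ for some $\alpha\in(0,\pi)$; write $\omega_1 = g_1'/h_1'$, $\omega_2 = g_2'/h_2'$ for the dilatations, so $\omega_j(0)=0$ and $|\omega_j|<1$. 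Since $\dfrac{z}{1-z}$ is the convolution identity, substituting $h_1 = \dfrac{z}{1-z}-g_1$ and $g_2 = (h_2+g_2)-h_2$ collapses the convolution to the compact form $F = h_2 - g_1*(h_2+g_2)$.

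Next I would differentiate, using $z(u*v)' = u*(zv')$, the partial fraction identity $\dfrac{z}{1+2z\cos\alpha+z^2} = \dfrac{1}{2i\sin\alpha}\bigl(\dfrac{1}{1+ze^{-i\alpha}}-\dfrac{1}{1+ze^{i\alpha}}\bigr)$, and the elementary fact that convolving a power series with vanishing constant term with $\dfrac{1}{1+xz}$ is the same as composing it with $z\mapsto -xz$, to obtain
\[
(1+2z\cos\alpha+z^2)\,F'(z) = \frac{1}{1+\omega_2(z)} - \frac{(1+ze^{i\alpha})(1+ze^{-i\alpha})}{2iz\sin\alpha}\bigl(g_1(-e^{-i\alpha}z) - g_1(-e^{i\alpha}z)\bigr).
\]
The goal is then to show $\re\{(1+2z\cos\alpha+z^2)F'(z)\} > 0$ on $\D$, and to conclude from the Royster--Ziegler characterization of analytic maps onto domains convex in the direction of the real axis (with the quadratic factor $(1+ze^{i\alpha})(1+ze^{-i\alpha})$, whose zeros $-e^{\pm i\alpha}$ are exactly the boundary points at which the strip mapping tends to infinity) that $F$ is univalent and convex in that direction.

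For the positivity the first summand is harmless, since $\re\{1/(1+\omega_2)\} > \tfrac12$ because $\omega_2$ maps $\D$ into $\D$; so the remaining task is to bound the $g_1$-term by $\tfrac12$ in real part. Writing $g_1' = \dfrac{\omega_1}{(1-z)^2(1+\omega_1)}$, decomposing $\dfrac{\omega_1}{1+\omega_1} = \tfrac12 - \tfrac12q$ with $q := \dfrac{1-\omega_1}{1+\omega_1}$ (a Carath\'eodory function), and expressing $g_1(-e^{-i\alpha}z)-g_1(-e^{i\alpha}z)$ as an integral of $g_1'$ along the arc joining $-e^{-i\alpha}z$ to $-e^{i\alpha}z$, the $g_1$-term becomes $\tfrac12$ minus an integral of $q$ against an explicit kernel; inserting the Herglotz representation of $q$, the required inequality reduces to a one-parameter family of integral inequalities whose geometric content is the positivity of $\re\dfrac{1-z^2}{1+2z\cos\alpha+z^2}$ on $\D$ --- that is, the convexity of the vertical strip in the direction of the real axis. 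I expect this last reduction to be the main obstacle: the rotated arguments $-e^{\pm i\alpha}z$ must be controlled for an \emph{arbitrary} admissible dilatation $\omega_1$, and this uses genuinely more than just $|\omega_1|<1$. In the model case where $f_1$ is the canonical right half-plane mapping, so $\omega_1(z) = -z$, the whole calculation collapses to the explicit identity $(1+2z\cos\alpha+z^2)F'(z) = \dfrac{1}{1+\omega_2(z)} - \dfrac{z(z+\cos\alpha)}{1+2z\cos\alpha+z^2}$, and the inequality follows at once from $\re\dfrac{1-z^2}{1+2z\cos\alpha+z^2}\ge 0$.
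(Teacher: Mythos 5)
Your reduction is sound as far as it goes, and it is in fact the standard scheme (the one the paper itself uses for Theorem 2, and the one used in \cite{dor}, where Theorem B is proved --- note the present paper only quotes Theorem B, so there is no in-paper proof to compare against): Lewy's theorem plus the dilatation condition give local univalence, the Clunie--Sheil-Small shearing theorem reduces everything to showing that $F=h_1*h_2-g_1*g_2$ is univalent and convex in the direction of the real axis, the collapse $F=h_2-g_1*(h_2+g_2)$ via the convolution identity $z/(1-z)$ is correct, and your identity
\[
(1+2z\cos\alpha+z^2)\,F'(z)=\frac{1}{1+\omega_2(z)}-\frac{(1+ze^{i\alpha})(1+ze^{-i\alpha})}{2iz\sin\alpha}\bigl(g_1(-e^{-i\alpha}z)-g_1(-e^{i\alpha}z)\bigr)
\]
checks out, as do the Royster--Ziegler criterion with the factor $(1+ze^{i\alpha})(1+ze^{-i\alpha})$ and the bound $\re\{1/(1+\omega_2)\}>\tfrac12$.

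The proposal is nevertheless incomplete at exactly its central point, and you say so yourself. The inequality you need, $\re\{(1+2z\cos\alpha+z^2)F'(z)\}>0$ for an \emph{arbitrary} half-plane dilatation $\omega_1$, is verified only for $\omega_1(z)=-z$, i.e.\ $f_1=f_0$ --- essentially the setting of Sections 3--4 of the paper, not of Theorem B. The sketched route for general $\omega_1$ (arc integral of $g_1'$, the splitting $\omega_1/(1+\omega_1)=\tfrac12-\tfrac12 q$, Herglotz representation of $q$) is not carried out; after interchanging integrals it leaves a two-parameter family of explicit kernel inequalities, equivalently the extreme cases $\omega_1(z)=-e^{-is}z$ for all real $s$, and none of these is checked. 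The difficulty is structural rather than cosmetic: your $g_1$-term involves the \emph{values} $g_1(-e^{\pm i\alpha}z)$, a nonlocal functional of $\omega_1$, so no pointwise bound $|\omega_1|<1$ can suffice and the Harnack-type linkage between the two rotated points must be exploited; this is precisely why the half-plane argument (where, as in the paper's Theorem 2, $zF_2'/\varphi$ is the pointwise Carath\'eodory value $p_1(ze^{i\gamma_2})$) does not transfer, convolution with the strip map being a logarithmic, hence integral, operator. Finally, your sufficient condition ``$\re$ of the $g_1$-term $\le\tfrac12$'' is itself an unproved strengthening of the theorem: if it fails for some admissible $\omega_1$, the result could still hold through a finer pairing with the first term (e.g.\ using $\re\{1/(1+\omega_2(z))\}\ge 1/(1+|z|)$ from the Schwarz lemma), so the argument would need to be redesigned, not merely patched. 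As written, the proposal proves Theorem B only in the special case $f_1=f_0$.
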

In section 2, we generalize Theorem A for harmonic mappings onto slanted half-planes given by
\begin{equation*}
H_{\gamma}= \bigg\{ z\in \C: \re (e^{i\gamma}z)>-\frac{1}{2}
\bigg\}, \mbox{ where } 0 \leq \gamma < 2\pi.
\end{equation*}
Next, we deal mainly with the convolution of the canonical harmonic right half-plane mapping (see \cite{css}) given by
\begin{equation}
\label{eq:f0}
f_0(z)=h_0(z)+\overline{g_0(z)}=\dfrac{z-\frac{1}{2}z^2}{(1-z)^2}
-\overline{\dfrac{\frac{1}{2}z^2}{(1-z)^2}}
\end{equation}
with harmonic mappings $f$ that are either right half-planes or strip mappings. We show that if the dilatation of $f$ is $e^{i\theta}z^n$ ($n=1,2$), then $f_0*f$ is locally univalent. However, we give examples when local univalency fails for $n \geq 3$. Also, we provide some results about univalency in the case the dilatation of $f$ is $\frac{z+a}{1+az}$. Finally, we give examples of univalent harmonic maps obtained by way of convolutions.

\section{Convoluting slanted half-plane mappings}

In \cite{as}, \cite{dor1}, and \cite{hs2}, explicit descriptions are
given for half-plane and strip mappings. Specifically, the
collection of functions $f=h+\cnj{g} \in \SHO$ that map $\D$ onto the
right half-plane, $R=\{w: \re (w) > -1/2\}$, have the form
\begin{equation*}
h(z)+g(z)=\dfrac{z}{1-z}
\end{equation*}
and those that map $\D$ onto the vertical strip, $\Omega
_{\alpha}= \Big\{ w: \frac{\alpha -\pi}{2 \sin \alpha} < \re (w) <
\frac{\alpha} {2\sin \alpha} \Big\}$, where $\frac{\pi}{2} \leq
\alpha < \pi$, have the form
\begin{equation*}
h(z)+g(z)=\dfrac{1}{2i\sin \alpha} \log \bigg(
\dfrac{1+ze^{i\alpha}} {1+ze^{-i\alpha}} \bigg).
\end{equation*}

\vspace{.2in}

\noindent Now, we first prove a generalization of Theorem A for the slanted half-plane,
$H_{\gamma}$, $0\leq \gamma<2\pi$,  described in the introduction.
Let $S^0(H_{\gamma}) \subset \SHO$ denote  the class of
harmonic functions  $f$ that map $\D$ onto $H_{\gamma}$. In the
case when $\gamma=0$ we get the normalized class of harmonic
functions that map $\D$ onto the right half-plane $\{w:\re (w)
>-1/2\}$.

\begin{lem} If $f=h+\bar g\in S^0(H_{\gamma})$, then
\begin{equation*}
 h(z)+e^{- 2i \gamma}g(z)=\frac z{1-ze^{i\gamma}},
\quad z\in \D.
\end{equation*}
\end{lem}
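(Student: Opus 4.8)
The plan is to reduce the statement to the description of right half-plane mappings recalled just above (the case $\gamma=0$) by rotating both the image and the variable. Given $f=h+\bar g\in S^0(H_\gamma)$, I would set
\[
F(z)=e^{i\gamma}f\!\left(e^{-i\gamma}z\right),\qquad z\in\D .
\]
Since $e^{-i\gamma}\D=\D$, this gives $F(\D)=e^{i\gamma}f(\D)=e^{i\gamma}H_\gamma=\{w:\re w>-\tfrac12\}$, the canonical right half-plane; and $F$, being $f$ pre-composed with a rotation of $\D$ and post-composed with a rotation of $\C$, is again harmonic, sense-preserving and univalent on $\D$.

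Next I would record $F$ in the form $F=H+\overline{G}$ and verify the normalization. Writing $g(z)=\sum_{n\ge1}b_nz^n$ one has $\overline{g(e^{-i\gamma}z)}=\overline{\sum_{n\ge1}b_ne^{-in\gamma}z^n}$, so
\[
H(z)=e^{i\gamma}h\!\left(e^{-i\gamma}z\right),\qquad G(z)=e^{-i\gamma}\sum_{n\ge1}b_ne^{-in\gamma}z^n .
\]
A direct computation gives $F(0)=0$, $H'(0)=h'(0)=1$ and $G'(0)=e^{-2i\gamma}g'(0)=0$, so $F\in\SHO$ and $F$ maps $\D$ onto the right half-plane. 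By the description of such mappings recalled above, $H(z)+G(z)=\dfrac{z}{1-z}$, i.e.
\[
e^{i\gamma}h\!\left(e^{-i\gamma}z\right)+e^{-i\gamma}\sum_{n\ge1}b_ne^{-in\gamma}z^n=\frac{z}{1-z},\qquad z\in\D .
\]

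Finally I would undo the rotation: replacing $z$ by $e^{i\gamma}z$ (still in $\D$) turns the left-hand side into $e^{i\gamma}h(z)+e^{-i\gamma}g(z)$ and the right-hand side into $\dfrac{e^{i\gamma}z}{1-e^{i\gamma}z}$, and multiplying through by $e^{-i\gamma}$ yields $h(z)+e^{-2i\gamma}g(z)=\dfrac{z}{1-ze^{i\gamma}}$, which is the assertion. I do not expect a genuine obstacle; the one point needing care is the bookkeeping of the two rotations, in particular checking that $F$ is normalized so that the quoted characterization of maps onto the right half-plane applies verbatim — an off-centre rotation of the variable, or dropping the outer factor $e^{i\gamma}$, would destroy $F_z(0)=1$. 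Alternatively one could argue directly via the Clunie--Sheil-Small shear theorem, observing that $H_\gamma$ is convex in the direction $e^{i(\pi/2-\gamma)}$, so that $h+e^{-2i\gamma}g$ is forced to be the conformal map of $\D$ onto $H_\gamma$ with derivative $1$ at the origin, namely $\dfrac{z}{1-ze^{i\gamma}}$; but the rotation argument is shorter and uses only what has already been stated.
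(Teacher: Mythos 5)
Your argument is correct, but it takes a different route from the paper. You reduce to the $\gamma=0$ case by conjugating with rotations: setting $F(z)=e^{i\gamma}f(e^{-i\gamma}z)=H+\overline{G}$, checking that $F\in\SHO$ maps $\D$ onto $\{w:\re w>-\tfrac12\}$, invoking the quoted description $H+G=\frac{z}{1-z}$ of normalized right half-plane mappings, and then undoing the rotation; your bookkeeping ($H(z)=e^{i\gamma}h(e^{-i\gamma}z)$, $G(z)=e^{-i\gamma}g(e^{-i\gamma}z)$, $G'(0)=e^{-2i\gamma}g'(0)=0$) is accurate, so the characterization applies and the substitution $z\mapsto e^{i\gamma}z$ followed by multiplication by $e^{-i\gamma}$ gives the claim. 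The paper instead argues directly: from $\re\{e^{i\gamma}(h(z)+e^{-2i\gamma}g(z))\}>-\tfrac12$ and the Clunie--Sheil-Small convexity theorem it deduces that $h+e^{-2i\gamma}g$ is convex in the direction $\pi/2-\gamma$, hence univalent, and maps $\D$ onto $H_\gamma$; being normalized it must be the conformal map $z/(1-ze^{i\gamma})$. So your main proof leans on the half-plane characterization quoted from the literature (which is itself proved by the shearing argument), gaining brevity, whereas the paper's proof is self-contained modulo the Clunie--Sheil-Small theorem and in effect re-derives that characterization in the slanted setting; the alternative you sketch at the end is essentially the paper's argument.
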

\begin{proof}
If $f=h+\bar g\in S^0(H_{\gamma})$, then $\re \big\{
e^{i\gamma}(h(z)+  \overline{g(z)} )  \big\}> -1/2$, which means
that $\re \big\{ e^{i\gamma}h(z)+ e^{-i\gamma}{g(z)} \big\} >
-1/2$. In other words, $\re \big\{
e^{i\gamma}(h(z)+e^{-2i\gamma}{g(z)} )\big\} > -1/2$. Since $f$ is
a convex harmonic function, it follows from a convexity theorem by
Clunie and Sheil-Small \cite{css} that the function
$h(z)+e^{-2i\gamma}{g(z)}$ is  convex in the direction
$\pi/2-\gamma$, and so is univalent. It is also clear, that
$z\mapsto f(z)+e^{-2i\gamma}{g(z)}$ maps $\D$ onto $H_{\gamma}$
which implies result.
\end{proof}

\begin{thm}
\label{thm:genA} If $f_k\in S^0(H_{\gamma_k}) $,  $k=1,2$, and
$f_1*f_2$ is locally univalent in $\D$, then $f_1*f_2$ is convex
in the direction $-(\gamma_1+\gamma_2).$
\end{thm}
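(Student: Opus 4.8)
The plan is to reduce the assertion, by rotating the image, to a statement about a single analytic function, and then to recognize that function as a rotation of a classical mapping. Put $\alpha=\gamma_1+\gamma_2$ and write $f_1*f_2=H+\overline G$ with $H=h_1*h_2$, $G=g_1*g_2$. Since $f_1*f_2$ is locally univalent (hence sense‑preserving) in $\D$, so is $\widetilde F:=e^{i\alpha}(f_1*f_2)=\widetilde H+\overline{\widetilde G}$, where $\widetilde H=e^{i\alpha}H$ and $\widetilde G=e^{-i\alpha}G$; moreover $f_1*f_2$ is convex in the direction $-\alpha$ exactly when $\widetilde F$ is convex in the direction of the real axis. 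By the theorem of Clunie and Sheil-Small \cite{css} that a locally univalent harmonic map $\widetilde H+\overline{\widetilde G}$ is univalent and convex in the direction of the real axis if and only if the analytic function $\widetilde H-\widetilde G$ is univalent and convex in that direction, it suffices to prove that
\[
\Psi:=\widetilde H-\widetilde G=e^{i\alpha}(h_1*h_2)-e^{-i\alpha}(g_1*g_2)
\]
is an analytic univalent mapping of $\D$ onto a domain convex in the direction of the real axis.

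Next I would compute $\Psi$ from the Lemma. Writing $\varphi_k(z)=z/(1-ze^{i\gamma_k})$, the Lemma gives $h_k=\varphi_k-e^{-2i\gamma_k}g_k$. Substituting this into $h_1*h_2$, expanding, and using the elementary convolution identities
\[
\varphi_1*\varphi_2=\frac{z}{1-ze^{i\alpha}},\qquad \frac{z}{1-ze^{i\gamma}}*\phi=e^{-i\gamma}\phi(e^{i\gamma}z)
\]
(the second valid for any $\phi$ analytic in $\D$ with $\phi(0)=0$), the two terms carrying the factor $g_1*g_2$ cancel, and one is left with
\[
\Psi(z)=\frac{e^{i\alpha}z}{1-ze^{i\alpha}}-e^{-i\gamma_2}g_2(e^{i\gamma_1}z)-e^{-i\gamma_1}g_1(e^{i\gamma_2}z).
\]
Differentiating and inserting $g_k'=\omega_k h_k'$ together with $h_k'(1+e^{-2i\gamma_k}\omega_k)=\varphi_k'=(1-ze^{i\gamma_k})^{-2}$, a short simplification yields
\[
\Psi'(z)=\frac{e^{i\alpha}}{(1-ze^{i\alpha})^2}\,P(z),\qquad
P(z):=\frac{1}{1+e^{-2i\gamma_2}\omega_2(e^{i\gamma_1}z)}+\frac{1}{1+e^{-2i\gamma_1}\omega_1(e^{i\gamma_2}z)}-1 .
\]

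Finally I would exploit that $f_1,f_2\in\SHO$, so each dilatation $\omega_k$ maps $\D$ into $\D$; hence the two denominators above lie in the disk $|w-1|<1$, so $P$ is zero‑free and analytic in $\D$ and $\re P(z)>\tfrac12+\tfrac12-1=0$. Thus $\Psi'$ never vanishes, and $\Psi(z)=\chi(e^{i\alpha}z)$ where $\chi$ is the analytic function with $\chi'(w)=P(e^{-i\alpha}w)/(1-w)^2$, i.e.\ $\re\{(1-w)^2\chi'(w)\}>0$ on $\D$. By the classical criterion for analytic functions convex in the direction of the real axis this makes $\chi$ univalent with $\chi(\D)$ convex in that direction, and since $e^{i\alpha}\D=\D$ the same holds for $\Psi$. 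Combined with the reduction above, this proves the theorem.

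I expect the main obstacle to be the middle step: pushing the Lemma‑substitutions through the Hadamard product while tracking the many factors $e^{\pm i\gamma_k}$, and in particular verifying that the $g_1*g_2$ contributions cancel so that $\Psi'$ collapses to the clean product $e^{i\alpha}(1-ze^{i\alpha})^{-2}P(z)$. That cancellation is exactly where the hypothesis $f_k\in S^0(H_{\gamma_k})$ (through the Lemma) is used, and it is what turns the problem into the trivial positive‑real‑part estimate for $P$. A secondary point needing care is confirming that the rotation $w\mapsto e^{i\alpha}w$ puts $\Psi'$ into the exact normalized shape $(1-w)^{-2}\cdot(\text{function with positive real part})$ to which the analytic convexity criterion applies.
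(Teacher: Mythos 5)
Your proposal is correct and takes essentially the same route as the paper: rotate by $e^{i(\gamma_1+\gamma_2)}$, reduce via the Clunie--Sheil-Small shearing theorem to the analytic function $e^{i\alpha}(h_1*h_2)-e^{-i\alpha}(g_1*g_2)$, and verify the Royster--Ziegler positivity condition using Lemma 1 together with $|\omega_k|<1$. Your $P(z)$ is exactly $\tfrac12\big(p_1(ze^{i\gamma_2})+p_2(ze^{i\gamma_1})\big)$ in the paper's notation, so the only difference is bookkeeping: you expand the shear function directly from the Lemma, whereas the paper organizes the same computation through the symmetric auxiliary convolutions $F_1,F_2$.
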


\begin{proof}
Let
\begin{align*}
F_1 & = (h_1+e^{-2i\gamma_1}g_1)*(h_2-e^{-2i\gamma_2}g_2), \mbox{ and} \\
F_2 & = (h_2+e^{-2i\gamma_2}g_2)*(h_1-e^{-2i\gamma_1}g_1).
\end{align*}
Then
\begin{equation*}
\frac12(F_1+F_2)=h_1*h_2-e^{-2i(\gamma_1+\gamma_2)}{g_1*g_2}.
\end{equation*}
The shearing theorem of Clunie and Sheil-Small \cite{css}
establishes that it is sufficient to show that the function
$\frac12(F_1+F_2)$ is convex in the direction $
-(\gamma_1+\gamma_2)$, or equivalently, that
$F=e^{i(\gamma_1+\gamma_2)}(F_1+F_2)$ is convex in the direction
of the real axis. A result by Royster and Ziegler \cite{rz} shows
that $F$ is convex in the real direction, if $\re \big \{
(zF'(z))/\varphi (z) \big \}
>0, \forall z \in \D$, where $\varphi (z) = \frac{ze^{i\alpha}}{(1-ze^{i\alpha}z)^2}$ with some
$\alpha\in\mathbb R$. Thus, if we show
this last condition, we are done.

By Lemma 1,
\begin{equation*}
zF_1'(z)= \frac z{1-ze^{i\gamma_1}}*[
z(h_2-e^{-2i\gamma_2}g_2)'(z)].
\end{equation*}
Furthermore,
\begin{align*}
z(h_2-e^{-2i\gamma_2}g_2)'(z)& = z \;
\frac{h_2'(z)-e^{-2i\gamma_2}g_2'(z)}
{h_2'(z)+e^{-2i\gamma_2}g_2'(z)} \; \big(
h_2'(z)+e^{-2i\gamma_2}g_2'(z)\big )
\\ & = \frac{zp_2(z)}{(1-e^{i\gamma_2}z)^2},
\end{align*}
where $\re \{ p_2(z) \} >0$ for all $z\in \D$. Consequently,
\begin{align*}
zF_1'(z) & = \frac
z{1-ze^{i\gamma_1}}*\frac{zp_2(z)}{(1-e^{i\gamma_2}z)^2} \\
& =  e^{-i\gamma_1}\frac
{ze^{i\gamma_1}}{1-ze^{i\gamma_1}}*\frac{zp_2(z)}
{(1-e^{i\gamma_2}z)^2} \\
& = \frac{ zp_2(ze^{i\gamma_1})}{(1-e^{i(\gamma_1+\gamma_2)}z)^2}
\end{align*}
Analogously,
\begin{equation*}
zF_2'(z)=\frac{
zp_1(ze^{i\gamma_2})}{(1-e^{i(\gamma_1+\gamma_2)}z)^2},
\end{equation*}
where $\re \{ p_1(z) \}>0$ for all $z\in \D$. Thus
\begin{equation*}
\text{Re} \left(\frac{e^{i(\gamma_1+\gamma_2)}(zF_1'(z)+zF_2'(z))}
{\frac{
ze^{i(\gamma_1+\gamma_2)}}{(1-e^{i(\gamma_1+\gamma_2)}z)^2}}
\right)= \re (p_1(ze^{i\gamma_2})+p_2(ze^{i\gamma_1}))>0.
\end{equation*}
\end{proof}

\section{Convoluting $f_0$ with right half-plane mappings}

In Theorem A, Theorem B, and Theorem 2, we require that the resulting convolution
function satisfy the dilatation condition
\begin{equation*}
|\omega(z)|=\bigg| \dfrac{g'(z)}{h'(z)} \bigg| < 1, \forall z \in
\D.
\end{equation*}
When is this not a necessary assumption? In the rest of the paper
we establish cases of these theorems for which this assumption can be omitted.

The following result about the number of zeros of polynomials in the disk is helpful in proving the next several theorems.

\begin{cohn}
\label{m:1}
(\cite{cohn} or see \cite{rasc}, p 375)
Given a polynomial
$$f(z)=a_0+a_1z+\cdots+a_nz^n$$ of degree $n$, let
$$f^*(z)=z^n\cnj{f(1/\cnj{z})}=\cnj{a_n}+\cnj{a_{n-1}}z+\cdots+\cnj{a_0}z^n.$$
Denote by $p$ and $s$ the number of zeros of $f$ inside the unit
circle and on it, respectively. If $|a_0|<|a_n|$, then
$$f_1(z)=\dfrac{\cnj{a_n}f(z)-a_0f^*(z)}{z}$$ is of degree $n-1$ with $p_1=p-1$
and $s_1=s$ the number of zeros of $f_1$ inside the unit circle
and on it, respectively.
\end{cohn}

The main result of this section is the following.

\begin{thm}
\label{thm:f0rhp} Let $f=h+\overline{g} \in \KHO$ with
$h(z)+g(z)=\frac{z}{1-z}$ and $\omega(z)=e^{i\theta}z^n$ $(n \in
\Z^+$ and $\theta \in \R)$. If $n=1,2$, then $f_0*f \in \SHO$ and
is convex in the direction of the real axis.
\end{thm}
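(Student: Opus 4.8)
The plan is to deduce everything from Theorem A. Since $f_0$ and $f$ are both right half-plane mappings in $\KHO$, Theorem A reduces the statement to checking that $f_0*f$ satisfies the dilatation condition, i.e.\ that its dilatation $\tilde\omega=(g_0*g)'/(h_0*h)'$ obeys $|\tilde\omega(z)|<1$ for every $z\in\D$; once that is known, Theorem A yields both $f_0*f\in\SHO$ and convexity in the direction of the real axis.

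The first task is to find $\tilde\omega$ explicitly. Writing $h_0=\tfrac12\bigl(\tfrac{z}{(1-z)^2}+\tfrac{z}{1-z}\bigr)$, $g_0=\tfrac12\bigl(\tfrac{z}{1-z}-\tfrac{z}{(1-z)^2}\bigr)$ and using the convolution identities $\tfrac{z}{1-z}*\phi=\phi$ and $\tfrac{z}{(1-z)^2}*\phi=z\phi'$ (valid for $\phi$ with $\phi(0)=0$), one gets $h_0*h=\tfrac12(h+zh')$ and $g_0*g=\tfrac12(g-zg')$, hence
\[
\tilde\omega(z)=\frac{(g_0*g)'(z)}{(h_0*h)'(z)}=\frac{-zg''(z)}{2h'(z)+zh''(z)}.
\]
Now $h+g=\tfrac{z}{1-z}$ and $g'=\omega h'$ give $h'=\bigl[(1-z)^2(1+\omega)\bigr]^{-1}$, and with $h''/h'=\tfrac{2}{1-z}-\tfrac{\omega'}{1+\omega}$ the previous display simplifies to
\[
\tilde\omega(z)=\frac{-z\omega'(z)(1-z)-2z\omega(z)(1+\omega(z))}{2(1+\omega(z))-z\omega'(z)(1-z)}.
\]
For $\omega(z)=e^{i\theta}z^n$ we have $z\omega'=n\omega$, and substituting this causes a striking collapse: for $n=2$ the numerator and denominator share the factor $1+z\omega$ and $\tilde\omega(z)=-\omega(z)=-e^{i\theta}z^2$, so $|\tilde\omega(z)|=|z|^2<1$ on $\D$ and this case is done; for $n=1$ one obtains
\[
\tilde\omega(z)=\frac{-e^{i\theta}z\,(1+z+2e^{i\theta}z^2)}{2+e^{i\theta}z+e^{i\theta}z^2}.
\]

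The case $n=1$ is the heart of the matter. Put $p(z)=1+z+2e^{i\theta}z^2$, so that the numerator of $\tilde\omega$ is $-e^{i\theta}z\,p(z)$ and the denominator is $e^{i\theta}p^{*}(z)$, where $p^{*}(z)=z^2\overline{p(1/\bar z)}=2e^{-i\theta}+z+z^2$. I would then argue as follows. The denominator $D(z)=2+e^{i\theta}z+e^{i\theta}z^2$ has no zero in $\D$: for $|z|<1$, $|e^{i\theta}z(1+z)|=|z|\,|1+z|<2$; and, more in the spirit of the later theorems, Cohn's Rule applied to $p$ locates all of its zeros precisely — the reduced polynomial is $3z+(2e^{-i\theta}-1)$, whose only zero lies in $\overline{\D}$, so $p$ has both zeros in $\overline{\D}$, and therefore $D=e^{i\theta}p^{*}$ has no zero in $\D$, lying on $\partial\D$ only in the degenerate case $e^{i\theta}=-1$. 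Moreover, on $|z|=1$ the reciprocal-polynomial identity $p^{*}(z)=z^{2}\overline{p(z)}$ gives $|p^{*}(z)|=|p(z)|$, so there $|D(z)|=|p(z)|$ and $|-e^{i\theta}z\,p(z)|=|z|\,|p(z)|=|p(z)|$; hence $|\tilde\omega(z)|=1$ for all $z$ on $\partial\D$. Since $\tilde\omega$ is a non-constant rational function with no poles in $\D$ (when $e^{i\theta}=-1$ the boundary zero of $D$ at $z=1$ cancels against the numerator, leaving $\tilde\omega(z)=\tfrac{z(2z+1)}{z+2}$), the maximum principle forces $|\tilde\omega(z)|<1$ for all $z\in\D$. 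Thus $f_0*f$ satisfies the dilatation condition and Theorem A completes the proof. The step I expect to require the most care is this last one: spotting the cancellations in $\tilde\omega$, establishing the boundary identity $|p^{*}|=|p|$, and pinning down the zeros of $D$ (where Cohn's Rule is the natural instrument), including the delicate boundary zero that occurs when $e^{i\theta}=-1$.
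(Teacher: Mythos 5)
Your proposal is correct and follows essentially the same route as the paper: reduce to the dilatation bound via Theorem A, compute $\widetilde{\omega}$ from $h_0*h=\tfrac12(h+zh')$ and $g_0*g=\tfrac12(g-zg')$, dispose of $n=2$ by the direct cancellation $\widetilde{\omega}=-e^{i\theta}z^2$, and for $n=1$ locate the zeros of the quadratic factor with Cohn's Rule (your reduced polynomial $3z+2e^{-i\theta}-1$ is the paper's $f_1$ up to a constant factor). The only difference is in the finish for $n=1$: the paper writes $\widetilde{\omega}=-ze^{2i\theta}\frac{(z+A)(z+B)}{(1+\cnj{A}z)(1+\cnj{B}z)}$ with $|A|,|B|\le 1$ and bounds each factor, whereas you use the boundary identity $|p^*|=|p|$ plus the maximum principle, handling the boundary zero at $e^{i\theta}=-1$ explicitly.
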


\begin{proof}
Let the the dilatation of $f_0*f$ be given by
$\widetilde{\omega}=(g_0*g)'/(h_0*h)'$. By Theorem A and by Lewy's
Theorem, we just need to show that $|\widetilde{\omega}(z)|<1,
\forall z \in \D$.

First, note that if $F$ is analytic in $\mathbb D$ and $F(0)=0$,
then from eq. (\ref{eq:f0})
\begin{align}
\label{eq:conveq}
h_0(z)*F(z)  =\frac{1}{2} \big[ F(z) +z F'(z) \big] \\
g_0(z)*F(z)  =\frac{1}{2} \big[ F(z) -z F'(z) \big] . \notag
\end{align}

Also, since $g'(z)=\omega(z) h'(z)$, we know $g''(z)=\omega(z)
h''(z)+\omega'(z) h'(z)$.

Hence
\begin{equation}
\label{eq:genomega}
\widetilde{\omega}(z)=
-\dfrac{zg''(z)}{2h'(z)+zh''(z)} =
\dfrac{-z\omega'(z)h'(z)-z\omega(z)h''(z)}{2h'(z)+zh''(z)}.
\end{equation}

Using $h(z)+g(z)=\frac{z}{1-z}$ and $g'(z)=\omega(z) h'(z)$, we
can solve for $h'(z)$ and $h''(z)$ in terms of $z$ and
$\omega(z)$:
\begin{align*}
h'(z) = & \dfrac{1}{(1+\omega(z))(1-z)^2} \\ \\h''(z) = &
\dfrac{2(1+\omega(z))-\omega'(z)(1-z)}{(1+\omega(z))^2(1-z)^3}. \\
\end{align*}

Substituting these formulas for $h'$ and $h''$ into the equation
for $\widetilde{\omega}$, we derive:
\begin{align}
\label{eq:omega}
\begin{split}
\widetilde{\omega}(z)= &
\dfrac{-z\omega'(z)h'(z)-z\omega(z)h''(z)}{2h'(z)+zh''(z)} \\
    = & -z \dfrac{\omega^2(z)+[\omega(z)-\frac{1}{2}\omega'(z)z]+\frac{1}{2}\omega'(z)}
    {1+[\omega(z)-\frac{1}{2}\omega'(z)z]+\frac{1}{2}\omega'(z)z^2}.
    \\
\end{split}
\end{align}

Now, consider the case in which $\omega(z)=e^{i\theta}z$. Then eq.
(\ref{eq:omega}) yields
\begin{align*}
\hspace{.5in} \widetilde{\omega}(z)= & -z e^{2i\theta} \frac{
\big( z^2+\frac{1}{2}e^{-i\theta}z+\frac{1}{2}e^{-i\theta} \big)}
{\big( 1+\frac{1}{2}e^{i\theta}z+\frac{1}{2}e^{i\theta}z^2 \big)}.
\end{align*}
If we denote
$f(z)=z^2+\frac{1}{2}e^{-i\theta}z+\frac{1}{2}e^{-i\theta}$, then
$f^*(z)=z^2\cnj{f(1/\cnj{z})}=1+\frac{1}{2}e^{i\theta}z+\frac{1}{2}e^{i\theta}z^2$.
In such a situation, if $z_0$ is a zero of $f$, then
$\frac{1}{\cnj{z_0}}$ is a zero of $f^*$. Hence,
\begin{align*}
\hspace{.5in} \widetilde{\omega}(z)= & -ze^{2i\theta}
\dfrac{(z+A)(z+B)}{(1+\cnj{A}z)(1+\cnj{B}z)}.
\end{align*}
By Cohn's Rule we have
\begin{equation*}
f_1(z)= \dfrac{\cnj{a_2}f(z)-a_0f^*(z)}{z} = \frac{3}{4}z+\bigg(
\frac{1}{2}e^{-i\theta}-\frac{1}{4} \bigg).
\end{equation*}
$f_1$ has one zero at $z_0=\frac{1}{3}-\frac{2}{3}e^{-i\theta}
\in \overline{\D}$. So, $f$ has two zeros, namely $A$ and $B$,
with $|A|, |B| \leq1$.

Next, consider the case in which $\omega(z)=e^{i\theta}z^2$. In
this case,
\begin{equation*}
|\widetilde{\omega}(z)|= \big|z^2 \big|  \bigg|
\frac{z^3+e^{-i\theta}}{1+e^{i\theta}z^3}\bigg|  =  |z|^2 < 1.
\end{equation*}
\end{proof}

\begin{rem}
If we assume the hypotheses of the previous theorem with the
exception of making $n \geq 3$, then for each $n$ we can find a
specific $\omega (z)=e^{i\theta}z^n$ such that $f_0*f \notin
\SHO$. For example, if $n$ is odd, let $\omega (z)=-z^n$ and then
eq. (\ref{eq:omega}) yields
\begin{equation*}
\widetilde{\omega}(z)=-z^n \frac{z^{n+1}+\big( \frac{n}{2}-1
\big)z - \frac{n}{2}}{1+\big( \frac{n}{2}-1
\big)z^n-\frac{n}{2}z^{n+1}}.
\end{equation*}
It suffices to show that for some point $z_0 \in \D$,
$|\widetilde{\omega}(z_0)| >1$. Let $z_0=-\frac{n}{n+1} \in \D$.
Then
\begin{align}
\label{eq:notuniv}
\begin{split}
\widetilde{\omega}(z_0) & = \bigg( \frac{n}{n+1} \bigg)^n
\dfrac{\big( \frac{n}{n+1} \big)^{n+1}-\big( \frac{n}{2}-1 \big)
\big( \frac{n}{n+1} \big) -\frac{n}{2}}{1-\big( \frac{n}{2}-1
\big) \big( \frac{n}{n+1} \big)^n -\big( \frac{n}{2} \big) \big(
\frac{n}{n+1} \big)^{n+1}} \\ & \\
& = 1 + \dfrac{ \Big[ \big( \frac{n+1}{n} \big)^n - \big(
\frac{n}{n+1} \big)^{n+1} \Big] + \Big[ 1- \frac{n}{n+1} \Big]}{
\big( \frac{n}{2}-1 \big) + \big( \frac{n}{2} \big) \big(
\frac{n}{n+1} \big) -  \big( \frac{n+1}{n} \big)^n}.
\end{split}
\end{align}
Note that $\big[ \big( \frac{n+1}{n} \big)^n - \big( \frac{n}{n+1}
\big)^{n+1} \big] + \big[ 1- \frac{n}{n+1} \big] > 0$. Also,
$\big( \frac{n}{2}-1 \big) + \big( \frac{n}{2} \big) \big(
\frac{n}{n+1} \big) -  \big( \frac{n+1}{n} \big)^n >0$ since
$\big( \frac{n}{2}-1 \big) + \big( \frac{n}{2} \big) \big(
\frac{n}{n+1} \big) >n-\frac{3}{2}
> e$ and $\big( \frac{n+1}{n} \big)^n$ is an increasing series
converging to $e$. Thus, if $n \geq 5$ is odd,
$|\widetilde{\omega}(z_0)|
>1$. If $n=3$, it is easy to compute that $|\widetilde{\omega}(z_0)|
= \big( \frac{3}{4} \big)^3 \Big| \frac{3^4-\frac{1}{2}\cdot 3
\cdot 4^3 -\frac{3}{2} \cdot 4^4}{4^4-\frac{1}{2} \cdot 3^3 \cdot
4 -\frac{3}{2} \cdot 3^4} \Big|
> 2$. Now, if $n$ is even, let $\omega (z)=z^n$ and
$z_0=-\frac{n}{n+1}$. This simplifies to the same
$\widetilde{\omega}(z_0)$ given eq. (\ref{eq:notuniv}) and the
argument above also holds for $n \geq 6$. If $n=4$,
$|\widetilde{\omega}(z_0)| = \big( \frac{4}{5} \big)^4 \Big|
\frac{4^5-4 \cdot 5^4-2 \cdot 5^5}{5^5-4^4 \cdot 5 -2 \cdot 4^5}
\Big|
> 15$.
\end{rem}

\begin{thm}
\label{thm:f0rhp2} Let $f=h+\overline{g} \in \KHO$ with
$h(z)+g(z)=\frac{z}{1-z}$ and $\omega(z)=\frac{z+a}{1+az}$ with $a
\in (-1,1)$. Then $f_0*f \in \SHO$ and is convex in the direction
of the real axis.
\end{thm}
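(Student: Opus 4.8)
The plan is to mirror the proof of Theorem~\ref{thm:f0rhp}. By Theorem A and Lewy's Theorem it is enough to show that the dilatation $\widetilde{\omega}=(g_0*g)'/(h_0*h)'$ of $f_0*f$ satisfies $|\widetilde{\omega}(z)|<1$ for all $z\in\D$. I would start from the general identity eq.~(\ref{eq:omega}), now with
\[
\omega(z)=\frac{z+a}{1+az},\qquad \omega'(z)=\frac{1-a^2}{(1+az)^2}.
\]
Substituting these in and clearing the factor $(1+az)^2$ from the numerator and denominator turns $\widetilde{\omega}$ into $-z$ times a ratio of quadratics in $z$ whose coefficients are polynomials in $a$. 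Collecting terms, I expect a common nonzero factor $\tfrac{1+a}{2}$ to drop out of both, leaving
\[
\widetilde{\omega}(z)=-z\,\frac{2z^2+(3a+1)z+(1+a)}{(1+a)z^2+(3a+1)z+2}.
\]

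The point of this simplification is that, because $a$ is real, the denominator is precisely $P^{*}$ for the numerator polynomial $P(z)=2z^2+(3a+1)z+(1+a)$, so $\widetilde{\omega}(z)=-z\,P(z)/P^{*}(z)$ and hence $|\widetilde{\omega}(z)|=|z|\,\bigl|P(z)/P^{*}(z)\bigr|$. It therefore suffices to prove that both zeros of $P$ lie in $\overline{\D}$: then $P^{*}$ is zero-free on $\overline{\D}$, $P/P^{*}$ is a finite Blaschke product, so $|P(z)/P^{*}(z)|\le1$ on $\D$ and $|\widetilde{\omega}(z)|\le|z|<1$. To control the zeros of $P=a_0+a_1z+a_2z^2$ with $a_0=1+a$, $a_1=3a+1$, $a_2=2$, note that $|a_0|=1+a<2=|a_2|$ for $a\in(-1,1)$, so Cohn's Rule applies. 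A short computation should give
\[
P_1(z)=\frac{\overline{a_2}\,P(z)-a_0\,P^{*}(z)}{z}=(1-a)\bigl[(3+a)z+(3a+1)\bigr],
\]
a degree-one polynomial with single zero $z_0=-(3a+1)/(3+a)$. Since $(3a+1)^2\le(3+a)^2$ reduces to $8a^2\le 8$, i.e.\ to $a^2\le1$, we get $|z_0|<1$ for $a\in(-1,1)$; Cohn's Rule then forces $P$ to have both of its zeros inside $\D$.

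Putting the pieces together, $|\widetilde{\omega}(z)|<1$ on $\D$, and Theorem A together with Lewy's Theorem gives $f_0*f\in\SHO$, convex in the direction of the real axis. The only genuine work is the algebraic reduction of eq.~(\ref{eq:omega}); the step I expect to be the main obstacle is verifying that the quadratic-over-quadratic expression really collapses to $-z\,P(z)/P^{*}(z)$ once the common factor is cancelled --- in particular that the denominator is exactly the reflection $P^{*}$ of the numerator, which is what makes Cohn's Rule the right tool. Everything after that identification is a routine degree-one calculation.
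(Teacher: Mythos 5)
Your proposal is correct and is essentially the paper's own proof: the paper also substitutes $\omega=\frac{z+a}{1+az}$ into eq.~(\ref{eq:omega}) to get $\widetilde{\omega}(z)=-z\,\frac{z^2+\frac{1+3a}{2}z+\frac{1+a}{2}}{1+\frac{1+3a}{2}z+\frac{1+a}{2}z^2}=-z f(z)/f^*(z)$ (your $P$ is just $2f$), applies Cohn's Rule once to reach the linear polynomial with zero $z_0=-\frac{1+3a}{a+3}$, and concludes $|A|,|B|<1$ so that $|\widetilde{\omega}|<1$. The algebra you flagged as the main obstacle does work out exactly as you predicted, with the factor $1+a\neq 0$ cancelling.
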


\begin{proof}
Using eq. (\ref{eq:omega}) with $\omega=\frac{z+a}{1+az}$, where $-1<a<1$, we have
\begin{align*}
\hspace{.5in} \widetilde{\omega}(z)= & -z \frac{ \big(
z^2+\frac{1+3a}{2}z+\frac{1+a}{2} \big)}
{\big( 1+\frac{1+3a}{2}z+\frac{1+a}{2}z^2 \big)} \\
= & -z \dfrac{f(z)}{f^*(z)} \\
= & -z \dfrac{(z+A)(z+B)}{(1+\cnj{A}z)(1+\cnj{B}z)}.
\end{align*}

Again using Cohn's Rule,
\begin{equation*}
f_1(z)= \dfrac{\cnj{a_2}f(z)-a_0f^*(z)}{z} =
\frac{(a+3)(1-a)}{4}z+\frac{(1+3a)(1-a)}{4}.
\end{equation*}
So $f_1$ has one zero at $z_0=-\frac{1+3a}{a+3}$ which is in the
unit circle since $-1 < a< 1$. Thus, $|A|$, $|B| <1$.
\end{proof}

Next, we provide some examples.

\begin{exmp}
Let $f_1=h_1+\cnj{g}_1$, where $h_1+g_1=\frac{z}{1-z}$ with $\omega=z$.
Then
\begin{align*}
h_1 & = \dfrac{1}{4} \log \bigg( \dfrac{1+z}{1-z} \bigg)
+\dfrac{1}{2} \dfrac{z}{1-z} \\
g_1 & = -\dfrac{1}{4} \log \bigg( \dfrac{1+z}{1-z} \bigg)
+\dfrac{1}{2} \dfrac{z}{1-z}.
\end{align*}
Consider $F_1=f_0*f_1=H_1+\cnj{G_1}$. Using eq. (\ref{eq:conveq}) we have
\begin{align*}
H_1 & =h_0*h_1 =\frac{1}{2} [h_1(z)+zh_1'(z)]
     = \dfrac{1}{8} \log \bigg( \dfrac{1+z}{1-z} \bigg)
    +\dfrac{\frac{3}{4}z-\frac{1}{4}z^3}{(1-z)^2(1+z)} \\
G_1 & =g_0*g_1=\frac{1}{2} [g_1(z)-zg_1'(z)] = -\dfrac{1}{8} \log \bigg(
\dfrac{1+z}{1-z} \bigg)
    +\dfrac{\frac{1}{4}z-\frac{1}{2}z^2-\frac{1}{4}z^3}{(1-z)^2(1+z)},
\end{align*}
and from eq. (\ref{eq:omega})
\begin{equation*}
\widetilde{\omega}(z)=-z \bigg( \dfrac{2z^2+z+1}{z^2+z+2} \bigg).
\end{equation*}
 We show that $F_1$ maps the unit disk onto the
domain whose boundary consists of the four  half-lines given by $\{ x\pm
\frac {\pi}8 i,\ x\leq -\frac 14\}$ and $\{-\frac 14+i y,\ |y| \geq \frac
{\pi}8\}$ (see Figure 2). In doing so, we use a similar argument to that used by Clunie and
Sheil-Small in Example 5.4 \cite{css}. We have
$$
F_1(z)=\re\left(  \frac{z-\frac12 z^2-\frac 12 z^3}{(1+z)(1-z)^2}
\right)+ i \im\left(\frac 14\ln\left(\frac{1+z}{1-z}\right)+ \frac
12\frac {z}{(1-z)^2}\right).
$$
Applying the transformation $\zeta=\frac{1+z}{1-z}=\xi+i\eta$,
$\xi>0$, we get
\begin{align*}F_1(z)&=\re
\frac1{8}\left(3\zeta-2-\frac1{\zeta}\right)+i\im \frac14\left(\ln
\zeta+\frac12(\zeta^2-1)\right)\\&= \frac1{8}\left(
3\xi-2-\frac{\xi}{\xi^2+\eta^2}\right)+\frac{i}{4}\left(\arctan
\frac{\eta}{\xi}+\xi\eta\right).\end{align*} Observe first that
the positive real axis $ \{\zeta=\xi+i\eta : \eta=0, \xi>0\}$ is
mapped monotonically onto the whole real axis. Next we find the
images of the level curves
$$\arctan
\frac{\eta}{\xi}+\xi\eta=c,\quad \xi,\eta>0.$$ The polar
coordinates equations of these level curves are
\begin{equation}\label{eq:0vc} \theta +r^2\sin \theta \cos\theta=c,\quad 0<\theta<\frac{\pi}{2}.\end{equation}
Hence \begin{align*}\xi&= \sqrt{(c-\theta)\cot \theta}\\
\eta&= \sqrt{(c-\theta)\tan \theta}, \quad 0<\theta<\min\Big\{ c,
\frac{\pi}2 \Big\}. \end{align*} Fix $c>0$. Then the image of
the curve given in (\ref{eq:0vc}) under $F_1$ is
\begin{align*}
F_1(z)&=\frac{1}{8}\left(3 \sqrt{(c-\theta)\cot \theta} -2
-\sqrt{\frac{\sin\theta\cos^3\theta}{c-\theta}} \right) + \frac
i4\, c\\&= u(c,\theta)+\frac i4 c\,.
\end{align*}
 If $0<c<\frac{\pi}{2}$, then $\theta\in (0,c),$ and one finds easily
 that $\lim_{\theta\to 0^+}u(c,\theta)=\infty$ and $\lim_{\theta\to c-}u(c,\theta)=-\infty$.
 The intermediate value property implies that in this case the image of the  level curve under $F_1$ is the
 entire horizontal line
$\{x+\frac{ic}4: -\infty<x<\infty\}.$
  If $c\geq\frac{\pi}2$, then $\lim_{\theta\to 0^+}u(c,\theta)=\infty$ and
   $\lim_{\theta\to \pi/2^-}u(c,\theta)=-\frac 14$. So in this case the images of   the level
  curves are horizontal half-lines $\{x+\frac{ic}4,: -\frac
  14<x<\infty\}$. This means that images of the level curves under $F_1$
  fill the domain whose boundary consists of the real axis and two
  half-lines
  $\{x +
\frac {\pi}8 i,\ x\leq -\frac 14\}$ and $\{-\frac 14+i y,\ y \geq \frac
{\pi}8\}$. Finally,  our claim follows from the fact that the
range of $F_1$ is symmetric with respect to the real axis.

The images of concentric circles inside $\D$ under the
harmonic maps $f_0$ and under $f_1$ are shown in Figure
\ref{fig:f0andf1}. The images of these concentric circles under
the convolution map $f_0*f_1=F_1$ are shown in Figure 2.

\begin{center}
\begin{figure}[h]
\label{fig:f0andf1}
\begin{tabular}{cc}
\hspace*{-.2in}
\includegraphics[scale=0.3]{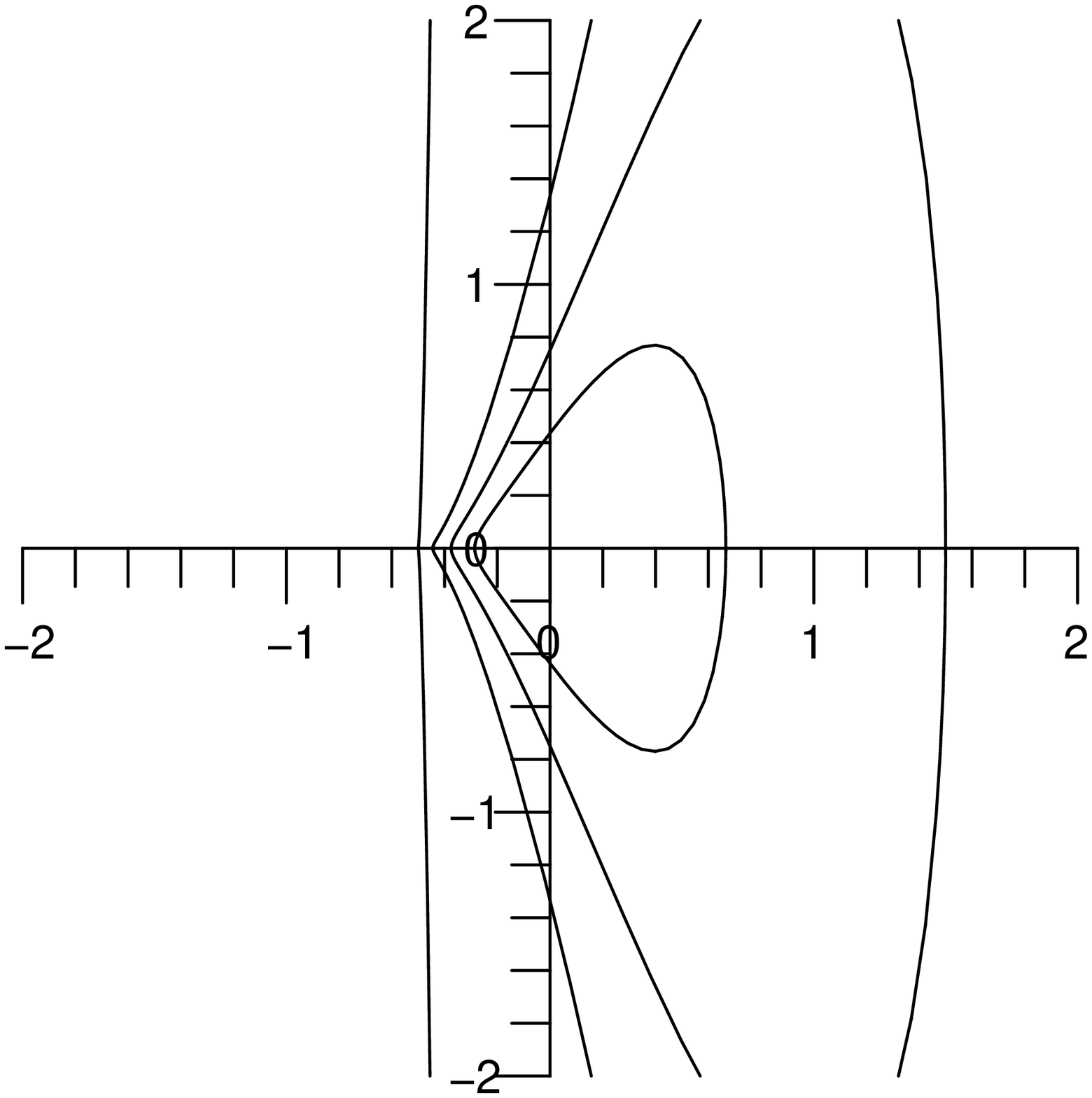} &
\includegraphics[scale=0.3]{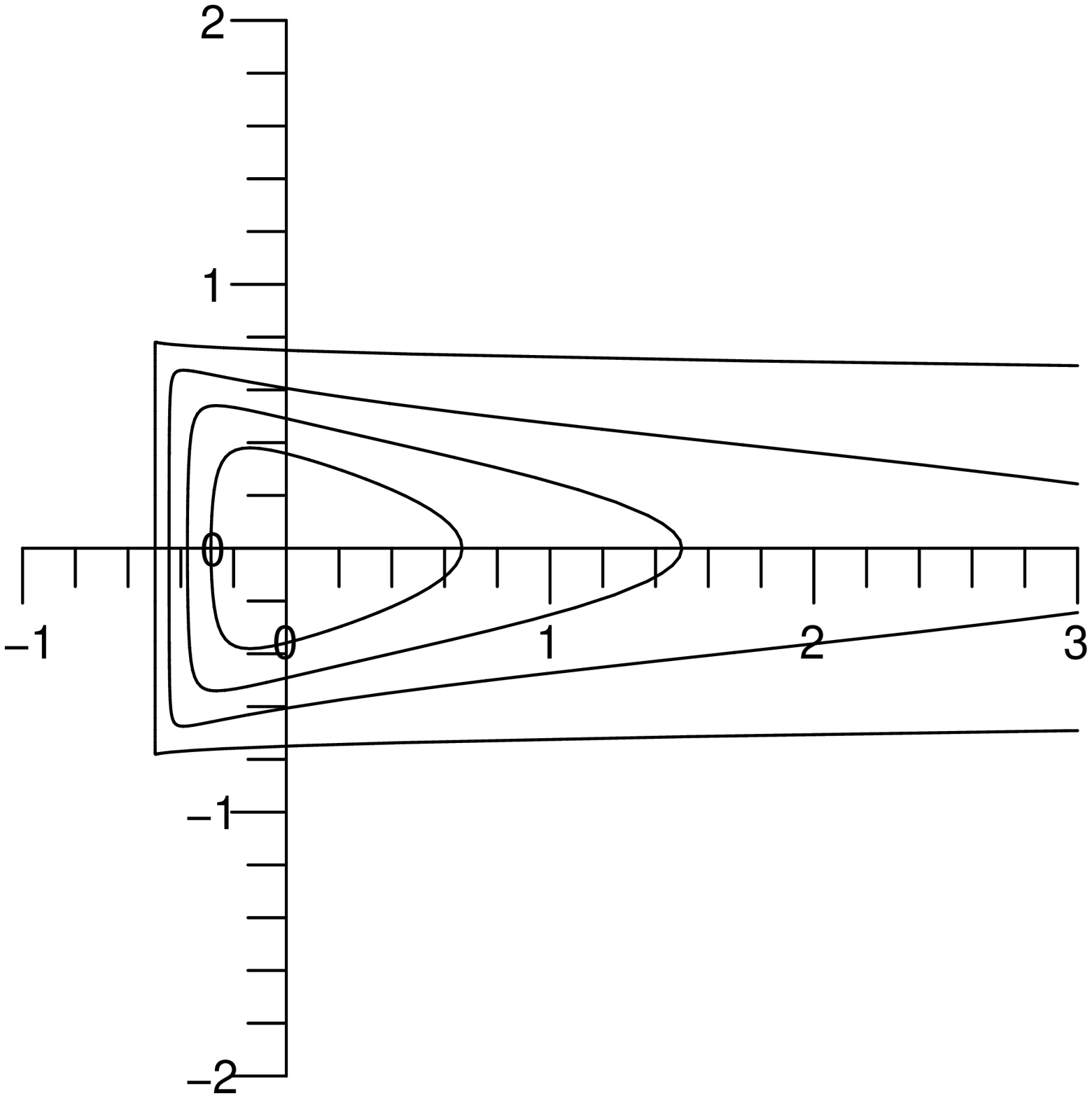} \\
\end{tabular}
\caption{Image of concentric circles inside $\D$ under the maps
$f_0$ and $f_1$, respectively.}
\end{figure}
\end{center}

\begin{center} \begin{figure}[h]
\label{fig:convf1} \hspace{-45pt}
\begin{tabular}{l}
\centerline{\includegraphics[scale=0.3]{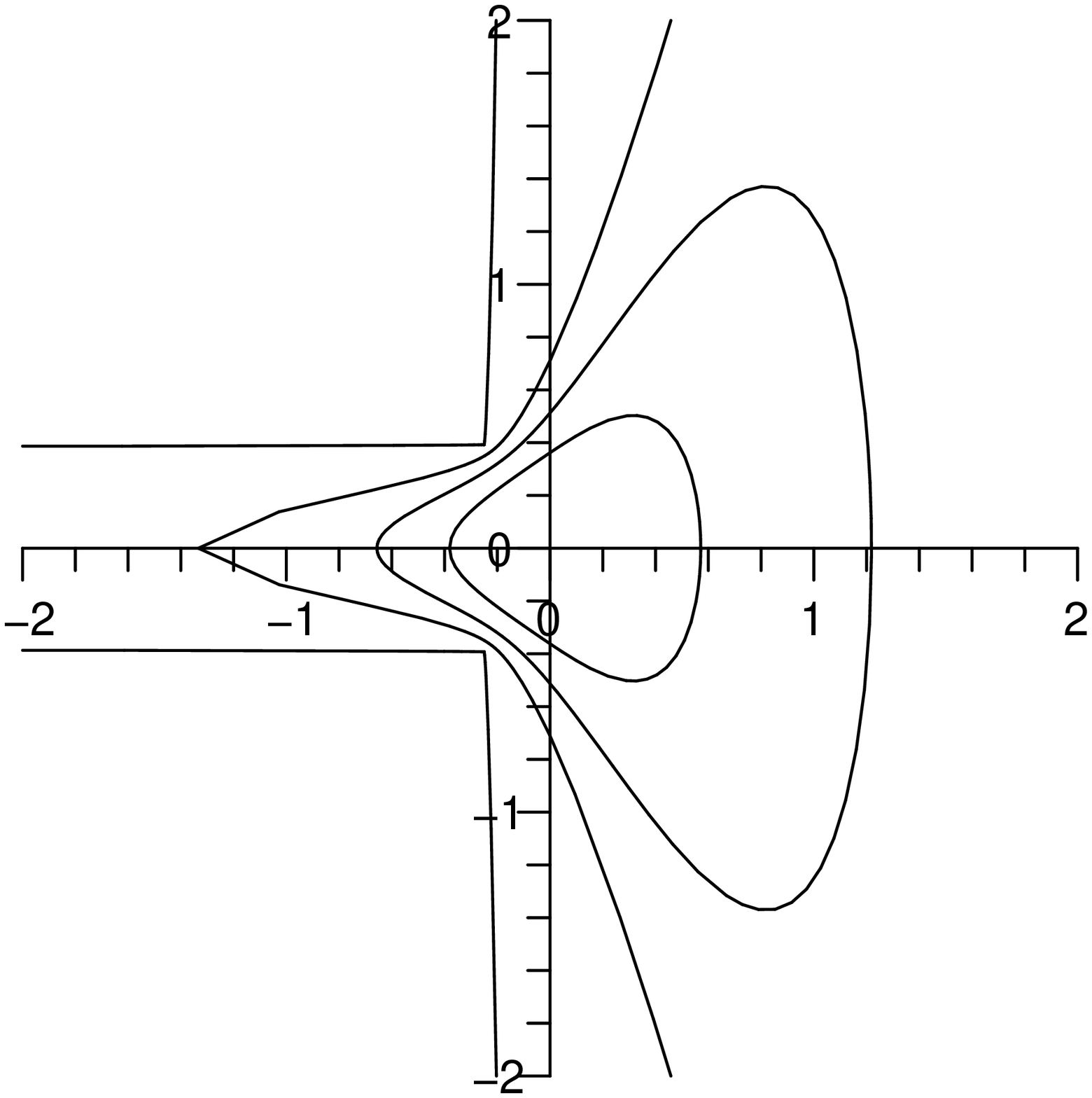}}
\end{tabular}
\caption{Image of concentric circles inside $\D$ under the
convolution map $f_0*f_1=F_1$.}
\end{figure} \end{center}
\end{exmp}

\begin{exmp}
Let $f_2=h_2+\overline{g_2}$ be the harmonic mapping in the disk
$\D$ such that $h_2(z)+g_2(z)= \frac{z}{1-z}$ and $\omega_2
(z)=\frac{g_2'(z)}{h_2'(z)}=-z^2$. One can find that
\begin{align*}
h_2(z)&=\frac18 \ln\left(\frac{1+z}{1-z}\right) +\frac12 \frac{z}{1-z}+\frac14 \frac{z}{(1-z)^2},\\
g_2(z)&=-\frac18 \ln\left(\frac{1+z}{1-z}\right) +\frac12
\frac{z}{1-z}-\frac14 \frac{z}{(1-z)^2}
\end{align*}
and the image of $\D$ under $f_2$ is the right
half-plane, $R=\left\{\omega:\re(\omega)>-\frac12\right\}$. We
note here that $ f_2(e^{it})=-\frac 12 + i\frac {\pi}{16},$ if $
0<t<\pi$ and $ f_2(e^{it})=-\frac 12 - i\frac {\pi}{16},$ if $
\pi<t<2\pi.$
 Next let
$$
F_2= h_0*h_2+\overline{g_0*g_2}=H_2+\overline{G_2}.$$ By eq.
(\ref{eq:conveq})
\begin{align*}
H_2(z)&=\frac{1}{2} \left[\frac18\ln\left(\frac{1+z}{1-z}\right)
+\frac12
\frac{z}{1-z}+\frac14\frac{z}{(1-z)^2}+\frac{z}{(1-z)^3(1+z)}\right],\\
G_2(z)&=\frac{1}{2} \left[-\frac18\ln\left(\frac{1+z}{1-z}\right)
+\frac12
\frac{z}{1-z}-\frac14\frac{z}{(1-z)^2}+\frac{z^3}{(1-z)^3(1+z)}\right]
\end{align*}
and
$$\widetilde{\omega}_2(z)=\frac{G_2'(z)}{H_2'(z)}=z^2$$
Analysis similar to that in Example 1 can be used to  show that
$F_2$ maps the disk onto the plane minus two half-lines given by $
x\pm\frac{\pi}{16}i$, $x\leq -\frac14$.  We have
$$
F_2(z)=\re\left( \frac12\, \frac{z(2-z+z^3)}{(1-z)^3(1+z)}
\right)+i\im \left( \frac18\ln\left(\frac{1+z}{1-z}\right)
+\frac68\frac{z}{(1-z)^2}\right),
$$
which under the same transformation as in Example 1 becomes
$$F_2(z)=
\frac1{16}\left(
\xi^3-3\xi\eta^2+4\xi-4-\frac{\xi}{\xi^2+\eta^2}\right)+\frac{i}{8}\left(\arctan
\frac{\eta}{\xi}+3\xi\eta\right).$$ Analogously,  we find that the
images of the level curves
$$\theta +\frac32\,r^2\sin 2\theta =c,\quad 0<\theta<\frac{\pi}{2}$$  are
\begin{align*}
F_2(z)&=\frac{1}{16}\left[ \sqrt{\frac 13\,(c-\theta)\cot \theta}
\left( (c-\theta)\left(\frac13\cot\theta-\tan\theta\right)  +4-
\frac{3\sin2\theta}{2(c-\theta)}\right) - 4 \right] + \frac i8\,
c\\&= u(c,\theta)+\frac i8 c\,.
\end{align*}
 If $0<c<\frac{\pi}{2}$ (or $c>\frac {\pi}2$, respectively),  then $\lim_{\theta\to c^-}u(c,\theta)=-\infty$ (or  $\lim_{\theta\to \frac{\pi}{2}}u(c,\theta)=-\infty$, respectively) and $\lim_{\theta\to 0^+}u(c,\theta)=\infty$. This means that the images of the level curves are entire horizontal lines. If  $c=\frac{\pi}{2}$, then
 $\lim_{\theta\to 0^+} u(\frac{\pi}2,\theta)=+\infty $ and $\lim_{\theta\to \pi/2 ^-} u(\frac{\pi}2,\theta)=-\frac14 $.
So,
 $F_2$ maps the first quadrant onto the upper half-plane minus the half-line
 $\{ x+i\frac {\pi}{16}:x\leq -\frac 14\}$, and the result
 follows from the symmetry.
\end{exmp}

\section{Convoluting $f_0$ with vertical strip mappings}

In this section we replace right half-plane maps with vertical strip maps and prove the corresponding analogues for Theorem 3 and Theorem 4.

\begin{thm}
\label{thm:f0avs} Let $f=h+\overline{g} \in \KHO$ with
$h(z)+g(z)=\frac{1}{2i\sin \alpha} \log \Big(
\frac{1+ze^{i\alpha}}{1+ze^{-i\alpha}} \Big)$, where
$\frac{\pi}{2} \leq \alpha < \pi$ and $\omega(z)=e^{i\theta}z^n$.
If $n=1,2$, then $f_0*f \in \SHO$ and is convex in the direction of the real
axis.
\end{thm}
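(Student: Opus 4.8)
The plan is to mirror the proof of Theorem \ref{thm:f0rhp} step by step, replacing the algebraic identity $h(z)+g(z)=\frac{z}{1-z}$ by the strip identity $h(z)+g(z)=\frac{1}{2i\sin\alpha}\log\big(\frac{1+ze^{i\alpha}}{1+ze^{-i\alpha}}\big)$. By Theorem B and Lewy's Theorem it suffices to show that the dilatation $\widetilde{\omega}=(g_0*g)'/(h_0*h)'$ of $f_0*f$ satisfies $|\widetilde{\omega}(z)|<1$ on $\D$. Formula (\ref{eq:genomega}), namely $\widetilde{\omega}=-zg''/(2h'+zh'')$, comes directly from (\ref{eq:conveq}) and does not use the range of $f$, so it is still valid here. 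The first real task is to compute $h'$ and $h''$ in terms of $z$, $\omega$, and $\omega'$: differentiating the strip identity gives $h'(z)+g'(z)=\frac{1}{(1+ze^{i\alpha})(1+ze^{-i\alpha})}=\frac{1}{1+2z\cos\alpha+z^2}$, and combining with $g'=\omega h'$ yields $h'(z)=\frac{1}{(1+\omega(z))(1+2z\cos\alpha+z^2)}$, from which $h''$ follows by one more differentiation.

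Substituting these expressions into (\ref{eq:genomega}) should produce an analogue of (\ref{eq:omega}); the denominator $1+2z\cos\alpha+z^2$ will replace the $(1-z)^2$ that appeared in the half-plane case, and after simplification I expect $\widetilde{\omega}$ to be $-z$ times a ratio of the form $\frac{\omega^2(z)+(\cdots)}{1+(\cdots)}$ where the bracketed terms involve $\omega$, $\omega'$, $z$, and $\cos\alpha$. Then I would specialize to $\omega(z)=e^{i\theta}z$ and to $\omega(z)=e^{i\theta}z^2$ separately, exactly as in Theorem \ref{thm:f0rhp}. For $n=2$ I anticipate the miracle again collapses: $\widetilde{\omega}(z)$ should reduce to $z^2$ times a Blaschke-type factor $\frac{z^3+e^{-i\theta}(\cdots)}{1+e^{i\theta}(\cdots)z^3}$ of modulus one on $\partial\D$ (or something equally clean), giving $|\widetilde{\omega}(z)|=|z|^2<1$ directly. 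For $n=1$, $\widetilde{\omega}(z)$ will be $-z$ times a ratio $f(z)/f^*(z)$ of a quadratic polynomial $f$ and its reciprocal $f^*$, so that $|\widetilde{\omega}(z)|<1$ on $\D$ is equivalent to $f$ having both zeros in $\overline{\D}$; this is settled by one application of Cohn's Rule, reducing $f$ to a linear polynomial $f_1$ whose single zero one checks lies in $\overline{\D}$ using $\frac{\pi}{2}\le\alpha<\pi$ (so $-1<\cos\alpha\le 0$).

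The main obstacle I expect is the $n=1$ Cohn's Rule computation: unlike Theorem \ref{thm:f0rhp} where the linear polynomial had the transparent zero $z_0=\frac13-\frac23 e^{-i\theta}$, here the coefficients of $f$ will carry the extra parameter $\cos\alpha$, and verifying that the resulting zero $z_0$ of $f_1$ satisfies $|z_0|\le 1$ for all admissible $\theta$ and all $\cos\alpha\in(-1,0]$ may require a genuine (though elementary) estimate rather than a one-line observation. A secondary concern is bookkeeping: one must double-check that the coefficient $a_2$ (leading) of $f$ indeed dominates $a_0$ in modulus so that Cohn's Rule applies, and that no spurious common factor between $f$ and $f^*$ has been dropped. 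Once those are in hand, the conclusion $f_0*f\in\SHO$ and convexity in the direction of the real axis follow immediately from Theorem B.
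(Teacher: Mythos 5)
Your plan is essentially the paper's proof: reduce via Theorem B (plus Lewy's theorem) to showing $|\widetilde{\omega}(z)|<1$ on $\D$, compute $h'$ and $h''$ from the strip identity together with $g'=\omega h'$, substitute into eq.~(\ref{eq:genomega}), and then treat $n=1$ by Cohn's Rule and $n=2$ by a direct collapse (indeed in the strip case $\widetilde{\omega}(z)=-e^{i\theta}z^2$ exactly, even cleaner than the Blaschke-type factor you anticipate). The one place your prediction is off is the degree count for $n=1$: the ratio $h''/h'$ in the strip case carries the extra factor $\cos\alpha+z$ (this is the term $\frac{1+z\cos\alpha}{\cos\alpha+z}$ appearing in eq.~(\ref{eq:omega2})), so after clearing denominators one gets $\widetilde{\omega}(z)=ze^{2i\theta}f(z)/f^*(z)$ with $f(z)=z^3+\bigl(\cos\alpha+\tfrac12 e^{-i\theta}\bigr)z^2-\tfrac12 e^{-i\theta}$ a \emph{cubic}, not a quadratic. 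Consequently Cohn's Rule must be applied twice (first checking $|{-\tfrac12 e^{-i\theta}}|<1$, then $\bigl|\tfrac12 e^{-i\theta}(\cos\alpha+\tfrac12 e^{i\theta})\bigr|<\tfrac34$, which uses $\alpha\neq\pi$), and the zero of the final linear factor lies in $\overline{\D}$ only after a genuine two-parameter estimate; the paper settles it by reducing the required modulus inequality to $\tfrac{3}{16}(1-x^2)(x-y)^2\ge 0$ with $x=\cos\alpha$, $y=\cos\theta$. This is precisely the ``elementary but genuine estimate'' you flagged as the main obstacle, so your approach does go through, just with one more Cohn iteration than you budgeted for.
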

\begin{proof}
By Theorem B we need to establish that $f_0*f=H +\cnj{G}$ is
locally univalent. Using $h(z)+g(z)=\frac{1}{2i\sin \alpha} \log
\Big( \frac{1+ze^{i\alpha}}{1+ze^{-i\alpha}} \Big)$ and
$g'(z)=\omega(z)h'(z)$, we get
\begin{align*}
h'(z) = & \dfrac{1}{(1+\omega(z))(1+ze^{i\alpha})(1+ze^{-i\alpha})} \\
\\h''(z) = &
\dfrac{-[2(\cos \alpha+z)(1+\omega(z))+\omega '(z)(1+2\cos \alpha z +z^2)]}
{(1+\omega(z))^2(1+ze^{i\alpha})^2(1+ze^{-i\alpha})^2}. \\
\end{align*}
Substituting these into eq. (\ref{eq:genomega}), yields
\begin{equation}
\label{eq:omega2} \widetilde{\omega}(z)= -z \Bigg\{
\dfrac{\omega^2(z)+\big[\omega(z)-\frac{1}{2}\omega'(z)z
\big]-\frac{1}{2}\omega'(z)\big( \frac{1+\cos \alpha z}{\cos
\alpha +z} \big)}
    {-\big(\frac{1+\cos \alpha z}{\cos \alpha +z}
\big)-\big[\omega(z)-\frac{1}{2}\omega'(z)z \big] \big(
\frac{1+\cos \alpha z}{\cos \alpha +z}
\big)+\frac{1}{2}\omega'(z)z^2} \Bigg\}.
\end{equation}
First, consider the case in which $\omega(z)=e^{i\theta}z$. We have
\begin{align*}
\widetilde{\omega}(z)= & z e^{2i\theta} \dfrac{z^3+(\cos
\alpha+\frac{1}{2}e^{-i\theta})z^2-\frac{1}{2}e^{-i\theta}}{1+(\cos
\alpha+\frac{1}{2}e^{i\theta})z-\frac{1}{2}e^{i\theta}z^3} \\
= & z e^{2i\theta} \dfrac{f(z)}{f^*(z)} \\
= & z e^{2i\theta}
\dfrac{(z+A)(z+B)(z+C)}{(1+\cnj{A}z)(1+\cnj{B}z)(1+\cnj{C}z)}.
\end{align*}

We will show that $A,B,C \in \overline{\D}$.  Let
 \begin{align*}
 \widetilde{\omega}(z)&=ze^{2i\theta}\frac{z^3+(\cos \alpha+\frac12 e^{-i\theta})z^2-\frac12 e^{-i\theta}}{1+(\cos \alpha+\frac12 e^{i\theta})z-\frac12 e^{i\theta}z^3}\\
 &=ze^{2i\theta}\frac{f(z)}{f^*(z)}\\
 &=ze^{2i\theta}\frac{(z+A)(z+B)(z+C)}{(1+\overline{A}z)(1+\overline{B}z)(1+\overline{C}z)},
\end{align*}
where $\alpha\in [\frac\pi 2, \pi)$, $\theta \in[-\pi,\pi]$.
We apply Cohn's Rule to $f(z)=z^3+(\cos \alpha+\frac12 e^{-i\theta})z^2-\frac12 e^{-i\theta}$. Note that $|\frac12 e^{-i\theta}|=\frac12<1$, thus we get
\begin{align*}
f_1(z)=\frac{\overline{a_3}f(z)-a_0f^*(z)}{z}=
\frac34z^2+(\cos \alpha+\frac12 e^{-i\theta})z+\frac12 e^{-i\theta}(\cos \alpha +\frac12 e^{i\theta}).
\end{align*}
Since $\left|\frac12 e^{-i\theta}(\cos \alpha +\frac12
e^{i\theta})\right|\leq \frac12|\cos
\alpha|+\frac14<\frac12+\frac14=\frac34$ (note that $\alpha\neq
\pi$), we can use Cohn's Rule again; this time on $f_1$.

We get
\begin{align*}
f_2(z)&=\frac{\frac34 f_1(z)-\frac12 e^{-i\theta}(\cos \alpha +\frac12 e^{i\theta})f_1^*(z)}{z} \\
&= \left(\frac{9}{16}-\frac14\left|\cos \alpha +\frac12
e^{i\theta}\right|^2\right)z+ \frac34\left(\cos \alpha + \frac12
e^{-i\theta}\right)-\frac12 e^{-i\theta}\left(\cos \alpha +\frac12
e^{i\theta}\right)^2.
\end{align*}
Clearly $f_2$ has one zero at
\begin{align*}
z &= \frac{-\frac34\left(\cos \alpha +\frac12 e^{-i\theta}\right)+\frac12 e^{-i\theta}\left(\cos \alpha +\frac12 e^{i\theta}\right)^2}{\frac{9}{16}-\frac14\left|\cos \alpha +\frac12 e^{i\theta}\right|^2} = \frac{-\frac14\cos\alpha +\frac12 e^{-i\theta}\cos^2 \alpha -\frac38 e^{-i\theta} +\frac18e^{i\theta}}{\frac12-\frac14 \cos^2\alpha -\frac14 \cos \alpha \cos \theta}.
\end{align*}
We show that $|z|\leq 1 $, or equivalently,
$$\left|-\frac14\cos\alpha +\frac12 e^{-i\theta}\cos^2 \alpha
-\frac38 e^{-i\theta} +\frac18e^{i\theta}\right|^2\leq
\left|\frac{1}{2}-\frac14 \cos^2\alpha -\frac14 \cos \alpha \cos
\theta\right|^2.$$ If we put $x=\cos \alpha$,  $y=\cos \theta$,
then $x \in(-1,0], y\in[-1,1]$ and the above inequality  becomes
\begin{align*}
-\frac{3}{16}x^4+\frac{3}{16}x^2+\frac{6}{16}x^3y-\frac{6}{16}xy-\frac{3}{16}x^2y^2+\frac{3}{16}y^2=
\frac3{16}(1-x^2)(x-y)^2\geq0,
\end{align*}

Therefore, by Cohn's Rule, $f$ has all its 3 zeros in $\overline{\D}$,
that is $A,B,C\in \overline{\D}$ and so $ |\widetilde{\omega}(z)|<1$ for all $z\in\D$.

Next, consider the case in which $\omega(z)=e^{i\theta}z^2$. In this case,
\begin{equation*}
\widetilde{\omega}(z)= -z^2 e^{i\theta} \Bigg\{ \dfrac{e^{i\theta}z^3-\Big( \dfrac{1+\cos \alpha z}{\cos \alpha +z} \Big)}{- \Big( \dfrac{1+\cos \alpha z}{\cos \alpha +z} \Big)+e^{i\theta}z^3}    \Bigg\}
=-z^2e^{i\theta}.
\end{equation*}
Hence, $| \widetilde{\omega}(z) | < 1$.
\end{proof}

In proving the last theorem, we will use the following corollary of the Schur-Cohn Algorithm.

\begin{schurcohn}
\label{m:2} (see \cite{rasc}, p
383) Given a polynomial
$$f(z)=a_0+a_1z+\cdots+a_nz^n$$ of degree $n$, let
\begin{equation*}
M_{\nu} = \mbox{det} \begin{pmatrix} B_{\nu}^* & A_{\nu} \\
A_{\nu}^* & B_{\nu} \end{pmatrix} \hspace{.2in} (\nu=1, \ldots,n),
\end{equation*}
where $A^*=\cnj{A}^{\top}$ is the conjugate transpose of $A$, and
$A_{\nu}$ and $B_{\nu}$ are the triangular matrices
\begin{equation*}
\begin{small}
A_{\nu}=\begin{pmatrix} a_0 & a_1 & \cdots & a_{\nu -1} \\ & a_0 &
\cdots & a_{\nu -2} \\ & & \ddots & \vdots \\ & & & a_0
\end{pmatrix}, \hspace{.2in}
B_{\nu}=\begin{pmatrix} \cnj{a}_n & \cnj{a}_{n-1} & \cdots & \cnj{a}_{n-\nu +1} \\
& \cnj{a}_n & \cdots & \cnj{a}_{n-\nu+2} \\ & & \ddots & \vdots \\
& & & \cnj{a}_n
\end{pmatrix}
\end{small}
\end{equation*}
The $f$ has all of its zeros inside the unit circle if and only if
the determinants $M_1, \ldots, M_n$ are all positive.
\end{schurcohn}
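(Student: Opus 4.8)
This is the classical Schur--Cohn criterion; since Cohn's Rule has just been recorded, the plan is to derive it by iterating that rule. Write $f^{(0)}=f$, and for $k\ge 0$ let $f^{(k+1)}$ be the polynomial of degree one less produced from $f^{(k)}$ by Cohn's Rule, so that $f^{(k)}=a_0^{(k)}+\cdots+a_{n-k}^{(k)}z^{n-k}$ has degree $n-k$; set $\delta_k=|a_{n-k}^{(k)}|^2-|a_0^{(k)}|^2$, the discriminant of the $k$-th step. I would split the argument into two parts: first reduce the geometric statement to the sign conditions $\delta_0,\dots,\delta_{n-1}>0$, and then match those with $M_1,\dots,M_n>0$ through a determinant identity.

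\emph{Step 1 (reduction to the $\delta_k$).} The claim is that $f$ has all $n$ zeros in $\D$ if and only if $\delta_0,\dots,\delta_{n-1}$ are all positive. For the forward direction, if all zeros of $f$ lie in $\D$ then $|a_0/a_n|$ is the product of the moduli of the zeros, hence $<1$, so $\delta_0>0$ and Cohn's Rule applies; it produces $f^{(1)}$ of degree $n-1$ with $p_1=n-1$ zeros inside and $s_1=s_0=0$ on the circle, i.e. \emph{all} zeros of $f^{(1)}$ lie in $\D$, and the argument repeats to give $\delta_k>0$ for every $k$. Conversely, if every $\delta_k>0$, Cohn's Rule may be applied $n$ times in succession, ending at the constant $f^{(n)}=\delta_{n-1}\ne 0$; since each step preserves the count $s$ of zeros on the circle and lowers the inside-count $p$ by one, we obtain $s=s_n=0$ and $p-n=p_n=0$, so $p=n$ and $f$ has all its zeros in $\D$.

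\emph{Step 2 (the determinant identity).} It remains to show that $M_1,\dots,M_n$ are all positive precisely when $\delta_0,\dots,\delta_{n-1}$ are. The route is the classical fact that $M_\nu$ has the same sign as $\delta_0\delta_1\cdots\delta_{\nu-1}$; granting this, the equivalence of ``all $M_\nu>0$'' with ``all $\delta_k>0$'' follows by an easy induction on $\nu$ --- from $\delta_0,\dots,\delta_{\nu-2}>0$ together with $M_\nu>0$ one reads off $\delta_{\nu-1}>0$, and the converse direction is clear. To prove the sign identity I would induct on $\nu$, using the Toeplitz block structure of $A_\nu$ and $B_\nu$: on $\begin{pmatrix}B_\nu^*&A_\nu\\A_\nu^*&B_\nu\end{pmatrix}$ one performs the row and column operations that carry the pair $(f,f^*)$ to the data attached to $f^{(1)}$ (a single pass of Cohn's Rule); tracking the effect on the determinant gives $\operatorname{sgn}M_\nu=\operatorname{sgn}\delta_0\cdot\operatorname{sgn}M_{\nu-1}'$, where $M_{\nu-1}'$ is the analogous $2(\nu-1)\times 2(\nu-1)$ determinant formed from the coefficients of $f^{(1)}$, and iterating yields the claim.

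\emph{Expected main obstacle.} Step 1 is routine manipulation of Cohn's Rule. The work is entirely in Step 2: carrying out the block elimination carefully enough to verify that the only sign contribution peeled off at each stage is $\operatorname{sgn}\delta_0$ while all other factors are manifestly positive, and disposing of the degenerate cases $\delta_k=0$, in which Cohn's Rule in the stated form stalls --- there one checks directly that some $M_\nu\le 0$, consistent with $f$ having in that case a zero in $|z|\ge 1$. It is precisely this bookkeeping that leads the statement to be customarily quoted, as it is here, from a reference such as \cite{rasc}.
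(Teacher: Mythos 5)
A preliminary remark on the comparison itself: the paper offers no proof of this statement --- like Cohn's Rule, it is quoted verbatim from Rahman--Schmeisser \cite{rasc} and used as a black box --- so your proposal is being measured against the classical argument, not against anything in the text. Your Step 1 is correct and complete: with $\delta_k=|a_{n-k}^{(k)}|^2-|a_0^{(k)}|^2$, iterating Cohn's Rule (each admissible step preserves $s$, lowers $p$ by one, and the process ends at the nonzero constant $\delta_{n-1}$) does show that $f$ has all $n$ zeros in $\D$ exactly when $\delta_0,\dots,\delta_{n-1}>0$.

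The gap is in Step 2, which is where all the content lies, and the identity you propose to prove there is not the correct one. The ``classical fact'' that $\operatorname{sgn}M_\nu=\operatorname{sgn}(\delta_0\delta_1\cdots\delta_{\nu-1})$ is false: already $M_2(f)=\delta_1$ exactly, with no $\delta_0$ present. Concretely, for $f(z)=2+z^2$ one computes $M_1=\delta_0=-3$, $f^{(1)}(z)=-3z$, $\delta_1=9$, and the $4\times4$ determinant gives $M_2=9>0$, while $\delta_0\delta_1=-27<0$; the same example kills your peeling recursion $\operatorname{sgn}M_\nu=\operatorname{sgn}\delta_0\cdot\operatorname{sgn}M_{\nu-1}'$ at $\nu=2$. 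What is true (with the paper's normalization of Cohn's Rule) is a relation of the shape $M_{\nu-1}\big(f^{(1)}\big)=\delta_0^{\,\nu-2}\,M_\nu(f)$, so the power of $\delta_0$ extracted at each pass depends on $\nu$ and is absent when $\nu=2$; your induction does survive once restated with this relation, because in the induction the earlier $\delta_j$ are already known to be positive (and then $\delta_{\nu-1}=0$ would force $M_\nu=0$, so no separate degenerate case is needed). But as written, the central determinant identity is both misstated and unproved --- the ``row and column operations'' you defer are precisely the nontrivial bookkeeping, and carried out for the identity you actually wrote down they would have to fail. So the proposal is a reasonable outline of the standard route (iterate Cohn's Rule, then relate the $M_\nu$ of $f$ to those of its Schur transform), but its key lemma is wrong in form and its proof is missing.
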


\begin{thm}
Let $f=h+\overline{g} \in \KHO$ with
$h(z)+g(z)=\frac{1}{2i\sin \alpha} \log \Big(
\frac{1+ze^{i\alpha}}{1+ze^{-i\alpha}} \Big)$, where
$\frac{\pi}{2} \leq \alpha < \pi$  and $\omega(z)=\frac{z+a}{1+az}$ with $a
\in [0,1)$. Then $f_0*f \in \SHO$ and is convex in the direction
of the real axis.
\end{thm}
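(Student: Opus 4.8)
The plan is to follow the proof of Theorem~\ref{thm:f0rhp2}, but with Cohn's Rule replaced by the Corollary to the Schur--Cohn Algorithm. By Theorem~B and Lewy's Theorem, it suffices to show that the dilatation $\widetilde{\omega}=(g_0*g)'/(h_0*h)'$ of $f_0*f$ satisfies $|\widetilde{\omega}(z)|<1$ for all $z\in\D$.

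First I would substitute $\omega(z)=\frac{z+a}{1+az}$, so that $\omega'(z)=\frac{1-a^{2}}{(1+az)^{2}}$, into eq.~(\ref{eq:omega2}) and clear denominators by multiplying numerator and denominator by $(1+az)^{2}(\cos\alpha+z)$. Because $a$ and $\cos\alpha$ are real, this produces
\begin{equation*}
\widetilde{\omega}(z)=-z\,\frac{f(z)}{f^{*}(z)},
\end{equation*}
where $f$ is a degree-$3$ polynomial with real coefficients that are explicit expressions in $a$ and $x:=\cos\alpha$, and (as one checks directly, just as in the proof of Theorem~\ref{thm:f0avs}) the denominator is exactly the reciprocal polynomial $f^{*}(z)=z^{3}\,\overline{f(1/\bar z)}$. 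Writing $f(z)=c\prod_{j=1}^{3}(z-z_j)$ one has $f^{*}(z)=\bar c\prod_{j=1}^{3}(1-\bar z_j z)$, and since $|1-\bar z_j z|^{2}-|z-z_j|^{2}=(1-|z_j|^{2})(1-|z|^{2})$, each factor $|z-z_j|/|1-\bar z_j z|$ is at most $1$ on $\D$ whenever $|z_j|\le 1$ (and equals a unimodular constant when $|z_j|=1$). Hence, \emph{if all three zeros of $f$ lie in $\overline{\D}$}, then $|f(z)/f^{*}(z)|\le 1$ on $\D$, so $|\widetilde{\omega}(z)|=|z|\,|f(z)/f^{*}(z)|\le|z|<1$, and we are done. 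The whole theorem thus reduces to a root-location statement for the cubic $f$.

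To establish that claim I would apply the Corollary to the Schur--Cohn Algorithm to $f(z)=a_0+a_1z+a_2z^{2}+a_3z^{3}$: compute the three principal minors $M_1=a_3^{2}-a_0^{2}$, $M_2$, and $M_3$ built from the Schur--Cohn matrices, and show $M_1,M_2,M_3>0$ for all $a\in[0,1)$ and all $x\in(-1,0]$. The first two reduce to elementary inequalities on the rectangle $[0,1)\times(-1,0]$; for the last I expect that, after simplification, $M_3$ collapses to a manifestly nonnegative quantity of the form $(\text{nonnegative factor})\times(\text{perfect square})$ --- built from factors such as $1-a$, $1+a$, $1+x$, $1-x^{2}$ together with squares of affine forms in $a$ and $x$ --- in the same spirit as the identity $\tfrac{3}{16}(1-x^{2})(x-y)^{2}\ge 0$ that drove the proof of Theorem~\ref{thm:f0avs}. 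If equality should occur at an extreme parameter value (so that $f$ acquires a zero on $|z|=1$), one still obtains all zeros of $f$ in $\overline{\D}$ by running the strict Schur--Cohn test at nearby interior parameters and invoking continuity of the zeros in $(a,x)$; in any case the subcase $a=0$ is precisely the $\theta=0$ instance of Theorem~\ref{thm:f0avs} and may simply be quoted.

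The one genuinely laborious point is $M_3$: it is the determinant of a $6\times 6$ matrix whose entries are quadratic in $a$ and $x$, so the brute-force expansion is bulky, and the real work is the algebraic simplification that displays it as a visibly nonnegative expression on $[0,1)\times(-1,0]$ --- finding the right factorization is the crux, exactly as it was in Theorem~\ref{thm:f0avs}. Everything else (the substitution and clearing of denominators, the reciprocal-polynomial bookkeeping, and the reduction of local univalence to the location of the zeros of $f$) is routine.
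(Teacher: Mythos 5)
Your proposal is correct and follows essentially the same route as the paper: substitute $\omega=\frac{z+a}{1+az}$ into eq.~(\ref{eq:omega2}) to write $\widetilde{\omega}=-z f/f^{*}$ with $f$ a real cubic in $z$ with coefficients in $a$ and $x=\cos\alpha$, then verify $M_1,M_2,M_3>0$ via the Corollary to the Schur--Cohn Algorithm. The factorization you anticipate for the hard minor is exactly what the paper obtains, namely $M_3=\tfrac14(x+1)(1-x)^{3}(1-a)^{3}(1-2ax-a)^{2}(1+3a)>0$ (with $M_2$ handled by a short monotonicity argument), so the strict inequalities hold on all of $[0,1)\times(-1,0]$ and your continuity safeguard is not even needed.
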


\begin{proof}
Using eq. (\ref{eq:omega2}) with $\omega=\frac{z+a}{1+az}$ and simplifying, we have
\begin{align*}
\hspace{.5in} \widetilde{\omega}(z)= & z \frac{ \Big\{
z^3+(\frac{1}{2}+\frac{3}{2}a+\cos \alpha)z^2+(a+2a\cos \alpha)z+(a\cos \alpha +\frac{1}{2}a-\frac{1}{2}) \Big\}}{\Big\{
1+(\frac{1}{2}+\frac{3}{2}a+\cos \alpha)z+(a+2a\cos \alpha)z^2+(a\cos \alpha +\frac{1}{2}a-\frac{1}{2})z^3 \Big\}} \\
= & -z \dfrac{f(z)}{f^*(z)} \\
= & -z \dfrac{(z+A)(z+B)(z+C)}{(1+Az)(1+Bz)(1+Cz)}.
\end{align*}

By the Corollary to the Schur-Cohn Algoritm, we need to show that the determinants
$M_1,M_2,M_3$ are all positive (for convenience, let $\cos \alpha = x$; so $-1 < x \leq 0$ and $-1<a<1$):

\begin{align*}
M_1
= & \mbox{det} \begin{pmatrix} a_3 & a_0 \\ \cnj{a}_0 &
\cnj{a}_3
\end{pmatrix} = \mbox{det} \begin{pmatrix} 1 & ax+\frac{1}{2}a-\frac{1}{2} \\
ax+\frac{1}{2}a-\frac{1}{2} & 1 \end{pmatrix} = \frac{1}{4}(2ax+a+1)(3-2ax-a)>0,
\\ &
\\
M_2= & \mbox{det} \begin{pmatrix} a_3 & 0 & a_0 & a_1 \\
a_2 & a_3 & 0 & a_0 \\ \cnj{a}_0 & 0 & \cnj{a}_3 & \cnj{a}_2 \\
\cnj{a}_1 & \cnj{a}_0 & 0 & \cnj{a}_3 \end{pmatrix} =
\mbox{det} \begin{pmatrix} 1 & 0 & ax+\frac{1}{2}a-\frac{1}{2} & a+2ax \\
\frac{1}{2}+\frac{3}{2}a+x & 1 & 0 & ax+\frac{1}{2}a-\frac{1}{2} \\
ax+\frac{1}{2}a-\frac{1}{2} & 0 & 1 & \frac{1}{2}+\frac{3}{2}a+x \\
a+2ax & ax+\frac{1}{2}a-\frac{1}{2} & 0 & 1
\end{pmatrix} \\ = & \frac{1}{4}(1-x)(1-a)(1-2ax-a)(2+4ax+4a+x-2a^2x^2-5a^2x-2a^2-2ax^2)>0,
\end{align*}

if $P(a,x)=2+4ax+4a+x-2a^2x^2-5a^2x-2a^2-2ax^2>0$. We will show that $P(x,a) > 0$ for $0 \leq a<1$ and $-1<x \leq 0$ (although it seems to be true for $-\frac{1}{3} < a <1$). Now
\begin{equation*}
\frac{\partial }{\partial x}P(a,x) = 4a+1-4a^2x-5a^2-4ax=\big[ 4a(1-x) -4a^2(1+x)\big] +\big[ 1-a^2 \big] >0,\end{equation*}
since $0<a<1$ and $-1<x \leq 0$.
Assume $a=a_0>0$ is fixed.
Then, $P(a_0,x)$ is increasing and attains its minimum at $x=-1$. Thus,
\begin{equation*}
P(a_0,x)>P(a_0,-1)=(a_0-1)^2>0.
\end{equation*}
Note, $P(0,x)=2+x>0$.

\begin{align*}
M_3= & \mbox{det} \begin{pmatrix} a_3 & 0 & 0 & a_0 & a_1 & a_2 \\
a_2 & a_3 & 0 & 0 & a_0 & a_1 \\ a_1 & a_2 & a_3 & 0 & 0 & a_0 \\
\cnj{a}_0 & 0 & 0 & \cnj{a}_3 & \cnj{a}_2 & \cnj{a}_1 \\
\cnj{a}_1 & \cnj{a}_0 & 0 & 0 & \cnj{a}_3 & \cnj{a}_2 \\ \cnj{a}_2
& \cnj{a}_1 & \cnj{a}_0 & 0 & 0 & \cnj{a}_3  \end{pmatrix} \\
 = & \mbox{det} \begin{pmatrix} 1 & 0 & 0 & ax+\frac{1}{2}a-\frac{1}{2} & a+2ax & \frac{1}{2} +\frac{3}{2}a+x \\ \frac{1}{2} +\frac{3}{2}a+x  & 1 & 0 & 0 & ax+\frac{1}{2}a-\frac{1}{2} & a+2ax \\
 \frac{1}{2} +\frac{3}{2}a+x  & a+2ax & 1 & 0 & 0 & ax+\frac{1}{2}a-\frac{1}{2} \\
 ax+\frac{1}{2}a-\frac{1}{2} & 0 & 0 & 1 & \frac{1}{2} +\frac{3}{2}a+x  & a+2ax  \\
 a+2ax  & ax+\frac{1}{2}a-\frac{1}{2} & 0 & 0 & 1 & \frac{1}{2} +\frac{3}{2}a+x  \\
\frac{1}{2} +\frac{3}{2}a+x & a+2ax  & ax+\frac{1}{2}a-\frac{1}{2} & 0 & 0 & 1 \\
 \end{pmatrix} \\
= & \frac{1}{4} (x+1)(1-x)^3(1-a)^3(1-2ax-a)^2(1+3a) > 0.
\end{align*}
Therefore, $A,B,C\in \D$ and $ |\widetilde{\omega}(z)|<1$ for all $z\in\D$.
\end{proof}

\begin{rem}
Unlike Theorem \ref{thm:f0rhp2}, this result does not hold for $-1<a<-\frac{1}{3}$ since $M_3 <0$ for these values of $a$.
\end{rem}

\begin{exmp}
Let $f_3=h_3+\cnj{g}_3$, where $h_3+g_3=\frac{1}{2i} \log \big( \frac{1+iz}{1-iz} \big)$ (that is, $\alpha = \frac{\pi}{2}$ in Theorem \ref{thm:f0avs})  with $\omega=-z^2$.
Then
\begin{align*}
h_3 & = \dfrac{1}{4} \log \bigg( \dfrac{1+z}{1-z} \bigg)
- \dfrac{i}{4} \log \bigg( \dfrac{1+iz}{1-iz} \bigg) \\
g_3 & = -\dfrac{1}{4} \log \bigg( \dfrac{1+z}{1-z} \bigg)
- \dfrac{i}{4} \log \bigg( \dfrac{1+iz}{1-iz} \bigg).
\end{align*}
Consider $F_3=f_0*f_3=H_3+\cnj{G_3}$. From eq. (\ref{eq:conveq}) we derive
\begin{align*}
H_3 & =h_0*h_3 =\frac{1}{2} [h_3(z)+zh_3'(z)]
     = \dfrac{1}{8} \log \bigg( \dfrac{1+z}{1-z} \bigg)
  - \dfrac{i}{8} \log \bigg( \dfrac{1+iz}{1-iz} \bigg) +\frac{1}{2}\frac{z}{1-z^4} \\
G_3 & =g_0*g_3=\frac{1}{2} [g_3(z)-zg_3'(z)] = -\dfrac{1}{8} \log \bigg( \dfrac{1+z}{1-z} \bigg)
  - \dfrac{i}{8} \log \bigg( \dfrac{1+iz}{1-iz} \bigg) +\frac{1}{2}\frac{z^3}{1-z^4}.
\end{align*}
From eq. (\ref{eq:omega2}), $\widetilde{\omega}(z)=z^2$.

We now show that the image of the first quadrant of  $\D$
under the mapping $F_3$ is the domain whose boundary consists of the
positive real axis, upper imaginary axis and the lines $\{\frac
{\pi}8 +iy ,\ y\geq \frac{\pi}8\}$,  $\{x+ \frac {\pi}8 i ,\ x\geq
\frac{\pi}8\}$. We have
$$F_3(z)=\re\left( -\frac{i}{4} \log \left( \frac{1+iz}{1-iz} \right)
+\frac{1}{2}\frac{z}{1-z^2}\right) +i\im\left(\frac{1}{4} \log
\left( \frac{1+z}{1-z} \right)
   +\frac{1}{2}\frac{z}{1+z^2}\right). $$
As in the previous two examples, we use the transformation
$\zeta=\frac{1+z}{1-z}=\xi+i\eta$, $\xi>0$, This transformation
maps the part of the disk in the first quadrant onto the exterior of the unit disk
contained in the first quadrant, and  we note that the interval
[0,i) is mapped onto the quarter of the unit circle. If  we put
$\zeta=r^{i\theta}$, $r\geq 1,\ \theta\in[0,\pi/2),$ then we get
\begin{align*}
\re F_3(z)&= \frac 14\left(\arctan\frac{r-\frac
1r}{2\cos\theta}+\frac 12\left(r-\frac
1r\right)\cos\theta\right)\\
\im F_3(z)&= \frac
14\left(\theta+\frac{2\sin2\theta}{\left(r-\frac
1r\right)^2+4\cos^2\theta}\right).\end{align*} One can see
that the image of the quarter of the unit circle in the first quadrant in the
$\zeta$-plane under $F_3$ is the upper imaginary axis and the
image of the line $\xi>1$ is the positive real axis. Now we
consider  the level curves
$$
\theta+\frac{2\sin2\theta}{\left(r-\frac
1r\right)^2+4\cos^2\theta}=c,\quad  c>0.$$ Since $r>1$ and
$\theta\in (0,  \pi/2)$, get  we get from the above
\begin{equation}
\label{eq:exam3}
r-\frac 1r=2\cos\theta\sqrt{\frac{\tan \theta}{c-\theta}-1}.
\end{equation}
Let $\theta_c\in(0,\pi/2)$ be the number  satisfying the equation
$\tan\theta_c=c-\theta_c$. If $0<c<\pi/2$, we assume that $\theta_c<\theta<c$, while if $c\geq \pi/2$, we assume that $\theta_c<\theta<\pi/2$. Using eq. (\ref{eq:exam3}) we find that on the level
curve we  have
$$\re F_3=\frac 14\left(\arctan\sqrt{\frac{\tan
\theta}{c-\theta}-1}+\cos^2 \theta \sqrt{\frac{\tan
\theta}{c-\theta}-1}\right)$$

Using an analysis similar to the one in the previous examples, we get the result.

The images of concentric circles inside $\D$ under the convolution map
$f_0*f_3=F_3$ are shown in Figure 3.

\begin{center} \begin{figure}[h]
\hspace{-45pt}
\begin{tabular}{l}
\centerline{\includegraphics[scale=0.3]{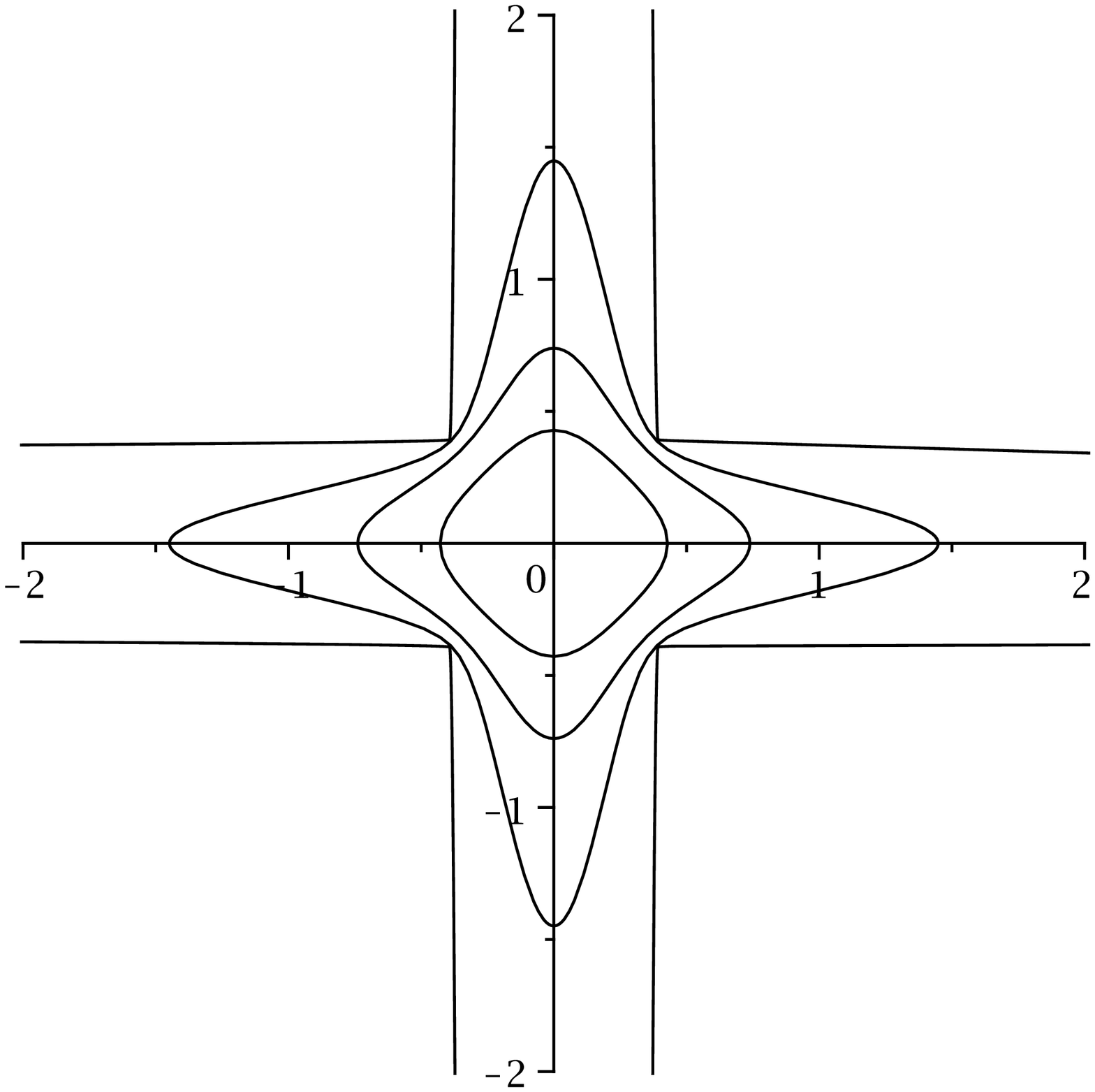}}
\end{tabular}
\caption{Image of concentric circles inside $\D$ under the
convolution map $f_0*f_3=F_3$.}
\end{figure} \end{center}
\end{exmp}

\end{document}